
\documentclass[a4,12pt]{amsart}
\oddsidemargin 0mm
\evensidemargin 0mm
\topmargin 0mm
\textwidth 160mm
\textheight 230mm
\tolerance=9999
\usepackage{amssymb,amstext,amsmath,amscd,amsthm,amsfonts,enumerate,latexsym}
\usepackage{color}
\usepackage[dvipdfmx]{graphicx}
\usepackage[all]{xy}
\theoremstyle{plain}
\newtheorem{thm}{Theorem}[section]

\newtheorem*{thm*}{Theorem}
\newtheorem*{cor*}{Corollary}

\newtheorem{proposition}[thm]{Proposition}
\newtheorem{lemma}[thm]{Lemma}

\newtheorem{cor}[thm]{Corollary}

\newtheorem*{claim*}{Claim}

\theoremstyle{definition}

\newtheorem{definition}[thm]{Definition}
\newtheorem{ex}[thm]{Example}

\newtheorem{fact}[thm]{Fact}

\theoremstyle{remark}

\numberwithin{equation}{thm}


\def\Ker{\operatorname{Ker}}

\def\mod{\mathrm{mod}}

\def\m{\mathfrak m}

\newcommand{\rme}{\mathrm{e}}

\newcommand{\rmr}{\mathrm{r}}

\newcommand{\rmK}{\mathrm{K}}

\newcommand{\rmQ}{\mathrm{Q}}

\newcommand{\fka}{\mathfrak{a}}
\newcommand{\fkb}{\mathfrak{b}}
\newcommand{\fkc}{\mathfrak{c}}

\newcommand{\fkm}{\mathfrak{m}}

\newcommand{\fkp}{\mathfrak{p}}
\newcommand{\fkq}{\mathfrak{q}}

\newcommand{\mapright}[1]{%
\smash{\mathop{%
\hbox to 1cm{\rightarrowfill}}\limits^{#1}}}

\newcommand{\mapleft}[1]{%
\smash{\mathop{%
\hbox to 1cm{\leftarrowfill}}\limits_{#1}}}

\def\depth{\operatorname{depth}}
\def\AGL{\operatorname{AGL}}

\def\Ass{\operatorname{Ass}}

\def\GGL{\operatorname{GGL}}
\def\AGL{\operatorname{AGL}}
\def\gr{\mbox{\rm gr}}


\title[Characterization of GGL rings]{Characterization of Generalized Gorenstein rings}

\author{Shiro Goto}
\address{Department of Mathematics, School of Science and Technology, Meiji University, 1-1-1 Higashi-mita, Tama-ku, Kawasaki 214-8571, Japan}
\email{shirogoto@gmail.com}

\author{Ryotaro Isobe}
\address{Department of Mathematics and Informatics, Graduate School of Science and Technology, Chiba University, Chiba-shi 263, Japan}
\email{r.isobe.math@gmail.com}

\author{Shinya Kumashiro}
\address{Department of Mathematics and Informatics, Graduate School of Science and Technology, Chiba University, Chiba-shi 263, Japan}
\email{polar1412@gmail.com}

\author{Naoki Taniguchi}
\address{Department of Mathematics, School of Science and Technology, Meiji University, 1-1-1 Higashi-mita, Tama-ku, Kawasaki 214-8571, Japan}
\email{taniguchi@meiji.ac.jp}
\urladdr{http://www.isc.meiji.ac.jp/~taniguchi/}


\thanks{2010 {\em Mathematics Subject Classification.} 13H10, 13H15, 13A30.}
\thanks{{\em Key words and phrases.} Cohen-Macaulay ring, Gorenstein ring, almost Gorenstein ring, 2-almost Gorenstein ring, generalized Gorenstein ring, canonical ideal, Sally module, Ulrich module}

\thanks{The first author was partially supported by JSPS Grant-in-Aid for Scientific Research (C) 16K05112. The fourth author was partially supported by JSPS Grant-in-Aid for Young Scientists (B) 17K14176.}


\begin{document}
\maketitle

\setlength{\baselineskip}{15pt}

\begin{abstract}
The notion of generalized Gorenstein local ring ($\GGL$ ring for short) is one of the generalizations of Gorenstein rings. In this article, there is given a characterization of $\GGL$ rings in terms of their canonical ideals and related invariants.
\end{abstract}




\section{Introduction}

The notion of a generalized Gorenstein local ring ($\GGL$ ring for short) is one of the generalizations of Gorenstein rings. Similarly, for almost Gorenstein local rings ($\AGL$ rings for short), the notion is given in terms of a certain specific embedding of the rings into their canonical modules (see \cite{GK} by the first and third authors). However, the research on $\AGL$ rings developed in \cite{GMP} by the first author, N. Matsuoka, and T. T. Phuong for arbitrary one-dimensional Cohen--Macaulay local rings is based on an investigation of the relationship between two invariants: the first Hilbert coefficient of the canonical ideals and the Cohen--Macaulay type of the rings. Therefore, it seems natural to ask for a possible characterization of AGL rings of higher dimension and that of GGL rings in terms of their canonical ideals and some related invariants. For AGL rings, this characterization has been accomplished by the first and fourth authors and R. Takahashi. They have already given a satisfactory result \cite[Theorem 5.1]{GTT}. The present purpose is to perform the task for GGL rings of higher dimension.

Originally, the series of studies in \cite{CGKM, GGHV, GK, GK2, GMP, GMTY1, GMTY2, GMTY3, GMTY4, GRTT, GTT, GTT2, GT, T} aimed to find a new class of Cohen--Macaulay local rings, which contains the class of Gorenstein rings. AGL rings are one of the candidates for such a class. Historically, the notion of an AGL ring in our sense has its roots in \cite{BF} by V. Barucci and R. Fr\"oberg in 1997, where they dealt with one-dimensional analytically unramified local rings. They also explored numerical semigroup rings, starting a very beautiful theory. In \cite{GMP}, the authors relaxed the notion to arbitrary Cohen--Macaulay local rings of dimension one based on a different point of view. Repairing a gap in the proof of \cite[Proposition 25]{BF}, they provided new possible areas of study for one-dimensional Cohen--Macaulay local rings. Among various results in \cite{GMP}, the most striking achievement seems that their arguments have prepared for a possible definition \cite[Definition 3.3]{GTT} of AGL rings of higher dimension. We now have two more notions: the $2$-almost Gorenstein local ring (\cite{CGKM}) and GGL ring (\cite{GK}), both of which are reasonable candidate generalizations of Gorenstein rings and almost Gorenstein rings as well.

As stated above, the present purpose is to find a characterization of GGL rings in terms of the canonical ideals and related invariants. To state our motivation and the results more precisely, let us review the definition of GGL rings. Throughout this article, let $(R,\m)$ be a Cohen--Macaulay local ring with $d= \dim R > 0$, possessing the canonical module $\rmK_R$. For simplicity, let us always assume that the residue class field $R/\m$ of $R$ is infinite. Let $\fka$ be an $\m$-primary ideal of $R$. With this notation, the definition of a $\GGL$ ring is stated as follows.

\begin{definition}[\cite{GK}]\label{def1.1}
We say that $R$ is a generalized Gorenstein local ring ($\GGL$ for short) if one of the following conditions is satisfied:
\begin{enumerate}[{\rm (1)}]
\item $R$ is a Gorenstein ring.
\item $R$ is not a Gorenstein ring, but there exists an exact sequence
$$0 \to R \xrightarrow{\varphi} \rmK_R \to C \to 0$$
of $R$-modules such that
\begin{enumerate}
\item[$\mathrm{(i)}$] $C$ is an Ulrich $R$-module with respect to $\fka$, and
\item[$\mathrm{(ii)}$] the induced homomorphism $R/\fka \otimes_R \varphi : R/\fka \to \rmK_R/\fka \rmK_R$ is injective.
\end{enumerate}
\end{enumerate}
When  Case (2) occurs, we especially say that $R$ is a $\GGL$ ring with respect to $\fka$.
\end{definition}

Let us further explain Definition \ref{def1.1}. Let $M$ be a finitely generated $R$-module of dimension $s\ge0$. Then, we say that $M$ is an Ulrich $R$-module with respect to $\fka$ if the following three conditions are satisfied:
\begin{enumerate}[{\rm (i)}]
\item $M$ is a Cohen--Macaulay $R$-module,
\item $\rme_\fka^0(M) = \ell_R(M/\fka M)$, and
\item $M/\fka M$ is a free $R/\fka$-module,
\end{enumerate}
where $\ell_R(*)$ is the length, and
$$\rme_\fka^0(M) = \lim_{n\to \infty}s!{\cdot}\frac{\ell_R(M/\fka^{n+1}M)}{n^s}$$
denotes the multiplicity of $M$ with respect to $\fka$ (\cite{GOTWY}). The notion of an Ulrich $R$-module with respect to an $\m$-primary ideal is a generalization of a maximally generated maximal Cohen--Macaulay $R$-module (i.e., a maximal Ulrich $R$-module with respect to $\fkm$; see \cite{BHU}). One can consult \cite{GK, GOTWY, GOTWY2, GTT} for the basic properties of Ulrich modules in our sense. Here, let us note one thing. In the setting of Definition \ref{def1.1}, suppose that there is an exact sequence $$0 \to R \to \rmK_R \to C \to 0$$ of $R$-modules such that $C \ne (0)$. Then, $C$ is a Cohen--Macaulay $R$-module of dimension $d-1$ (\cite[Lemma 3.1 (2)]{GTT}), and $C$ is an Ulrich $R$-module with respect $\fka$ if and only if $C/\fka C$ is a free $R/\fka$-module and
$$\fka C = (f_2, f_3, \ldots, f_{d})C$$
for some elements $f_2, f_3, \ldots, f_d \in \fka$ (\cite[Proof of Proposition 2.4, Claim]{GK}). Therefore, if $\fka = \m$, Definition \ref{def1.1} is exactly the same as that of AGL rings given by \cite[Definition 3.3]{GTT}. In \cite{GK}, the authors investigate GGL rings, and one can find a report of the basic results for GGL rings, which greatly generalizes several results in \cite{GTT}, clarifying what AGL rings are.

The present purpose is to give a characterization of GGL rings. Let $\rmr(R)$ stand for the Cohen--Macaulay type of $R$. We then have the following, which is the main result of this article.

\begin{thm}\label{1.2} Let $(R,\m)$ be a Cohen--Macaulay local ring with $d=\dim R >0$ and an infinite residue class field, possessing the canonical module $\rmK_R$. Let $I \subsetneq R$ be an ideal of $R$ such that $I \cong \rmK_R$ as an $R$-module. Let $\fka$ be an $\m$-primary ideal of $R$. Assume that $R$ is not a Gorenstein ring. Then, the following conditions are equivalent.
\begin{enumerate}[{\rm (1)}]
\item $R$ is a $\GGL$ ring with respect to $\fka$.
\item There exists a parameter ideal $Q = (f_1, f_2, \ldots, f_d)$ of $R$ such that $f_1 \in  I$ and, setting $J = I + Q$, the following three conditions are satisfied.
\begin{enumerate}[{\rm (i)}]
\item $\fka = Q:_RJ$.
\item $\fka J = \fka Q$.
\item $\rme_J^1(R) = \ell_R(R/\fka){\cdot}\rmr(R)$.
\end{enumerate}

\noindent
When this is the case, $R/\fka$ is a Gorenstein ring, and the following assertions hold true.
\end{enumerate}
\begin{enumerate}[{\rm (a)}]
\item $J^3 = QJ^2$, but $J^2 \ne QJ$.
\item $\mathcal{S}_Q(J) \cong (\mathcal{T}/\fka \mathcal{T})(-1)$, as a graded $\mathcal{T}$-module, where $\mathcal{S}_Q(J)$ $($resp. $\mathcal{T} = \mathcal{R}(Q)$$)$ denotes the Sally module of $J$ with respect to $Q$ $($see \cite{V}$)$ $($resp. the Rees algebra of $Q$$)$.
\item $f_2, f_3, \ldots, f_d$ form a super-regular sequence of $R$ with respect to $J$, and $\depth \operatorname{gr}_J(R) = d-1$, where $\operatorname{gr}_J(R) = \bigoplus_{n \ge 0}J^{n}/J^{n+1}$ denotes the associated graded ring of $J$.
\item The Hilbert function of $R$ with respect to $J$ is given by
{\footnotesize $$\ell_R(R/J^{n+1}) =
\rme_J^0(R){\cdot}\binom{n+d}{d} -\left[\rme_J^0(R) - \ell_R(R/J) + \ell_R(R/\fka)\right]{\cdot}\binom{n+d-1}{d-1} + \ell_R(R/\fka){\cdot}\binom{n+d-2}{d-2}
$$}
\noindent
for $n \ge 1$. Hence, $\rme_J^2(R) = \ell_R(R/\fka)$ if $d \ge 2$, and $\rme_J^i(R) = 0$ for all $3 \le i \le d$ if $d \ge 3$.
\end{enumerate}
\end{thm}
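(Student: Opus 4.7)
The strategy is to translate the GGL structure into ideal-theoretic data on the canonical ideal $I$. Since $I \cong \rmK_R$, the embedding $\varphi \colon R \hookrightarrow \rmK_R$ of Definition \ref{def1.1} becomes multiplication by an element $f_1 \in I$, and the cokernel $C$ becomes $I/f_1R$. The theorem then amounts to the equivalence of the pair of module-theoretic conditions ``$C = I/f_1R$ is Ulrich with respect to $\fka$ and $R/\fka \to I/\fka I$ is injective'' with the ideal-theoretic conditions (i)--(iii), once the parameter ideal $Q = (f_1, \ldots, f_d)$ is chosen appropriately with generators from $\fka$.

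For (1) $\Rightarrow$ (2), start with the GGL exact sequence and choose $f_1 \in I$ so that $\varphi(1) = f_1$ under $I \cong \rmK_R$; the Ulrich structure then supplies $f_2, \ldots, f_d \in \fka$ with $\fka C = (f_2, \ldots, f_d) C$, and the infinite residue field allows us to arrange additionally that $f_1, \ldots, f_d$ form a system of parameters of $R$. Set $Q = (f_1, \ldots, f_d)$ and $J = I + Q$. Since $f_1 \cdot C = 0$ one has $QC = \fka C$, and together with the surjection $C/QC \twoheadrightarrow J/Q$ — which one shows is an isomorphism by exploiting that $f_2, \ldots, f_d$ is a regular sequence on the Cohen--Macaulay module $C$ — condition (i) follows from $\ann_R(C/\fka C) = \fka$, which in turn is a direct consequence of the $R/\fka$-freeness of $C/\fka C$. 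Condition (ii) is a refinement of the same freeness: lifting an $R/\fka$-basis of $C/\fka C$ to elements of $I$ and combining with $\fka C = (f_2,\ldots,f_d)C$ forces $\fka I \subseteq \fka Q$, so $\fka J = \fka I + \fka Q = \fka Q$.

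Condition (iii), together with the structural assertions (a)--(d), is obtained through Sally-module analysis. From (i) and (ii) one first verifies $J^3 = QJ^2$ while $J^2 \ne QJ$, so that $Q$ is a minimal reduction of $J$ with reduction number exactly two. A direct calculation using the identification $C \cong I/f_1R$ and the Ulrich structure then shows that the Sally module $\mathcal{S}_Q(J)$ is isomorphic, as a graded $\mathcal{T}$-module, to $(\mathcal{T}/\fka \mathcal{T})(-1)$, establishing (b). Applying Vasconcelos' Hilbert-coefficient formulae to this specific Sally module produces the explicit Hilbert polynomial of (d), whence $\rme_J^1(R) = \rmr(R) \cdot \ell_R(R/\fka)$ together with the statements on the higher coefficients; the depth statement in (c) comes from the same Sally-module description, and the super-regularity of $f_2, \ldots, f_d$ with respect to $J$ is inherited from their regular-sequence behaviour on $C$.

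The converse (2) $\Rightarrow$ (1) reverses this translation: setting $C = I/f_1R$, which is Cohen--Macaulay of dimension $d-1$ by \cite[Lemma~3.1(2)]{GTT}, condition (i) yields $\fka C = (f_2, \ldots, f_d) C$ after checking the intersection $I \cap Q \subseteq f_1 R + (f_2,\ldots,f_d) I$ from the parameter-ideal structure, while the freeness of $C/\fka C$ over $R/\fka$ is extracted from (iii) by a length count — the Sally-module structure implied by (iii) together with (ii) forces $\ell_R(C/\fka C) = \rmr(R) \cdot \ell_R(R/\fka)$, which matches the length of a free $R/\fka$-module of rank $\rmr(R) = \mu_R(C)$, so the natural surjection $(R/\fka)^{\rmr(R)} \twoheadrightarrow C/\fka C$ is an isomorphism. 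Injectivity of $R/\fka \otimes_R \varphi$ then reduces to a colon computation using (i), (ii). The main obstacle throughout is the Sally-module isomorphism $\mathcal{S}_Q(J) \cong (\mathcal{T}/\fka \mathcal{T})(-1)$: once this is in hand, both the Hilbert-coefficient identity $\rme_J^1(R) = \rmr(R) \cdot \ell_R(R/\fka)$ in the forward direction and the length bookkeeping required for the converse follow with only routine further work.
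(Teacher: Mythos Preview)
Your translation of the GGL data via $f_1=\varphi(1)$, $C=I/f_1R$, and the identification $J/Q\cong C/\fka C$ matches the paper's setup. The substantive divergence is the logical order between the Sally-module isomorphism and condition~(iii), and there your plan is circular. In the paper, (iii) is obtained \emph{first}, by reducing modulo $\fkq=(f_2,\ldots,f_d)$ (or a superficial element) to dimension one and invoking the one-dimensional theory (Theorem~\ref{2.3}); the isomorphism $\mathcal{S}_Q(J)\cong(\mathcal{T}/\fka\mathcal{T})(-1)$ is then \emph{derived} from (iii). Concretely, once $J^2/QJ\cong R/\fka$ one has a surjection $(\mathcal{T}/\fka\mathcal{T})(-1)\twoheadrightarrow\mathcal{S}_Q(J)$, and injectivity is checked by comparing lengths at $\fkm\mathcal{T}$ via the formula $\ell_{\mathcal{T}_{\fkm\mathcal{T}}}(\mathcal{S}_{\fkm\mathcal{T}})=\rme_J^1(R)-\ell_R(J/Q)$ from \cite{GNO}; this equals $\ell_R(R/\fka)$ precisely \emph{because} (iii) and the freeness of $J/Q$ are already in hand. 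You propose the reverse---derive (iii) from the Sally-module isomorphism---which is a valid implication, but then you owe an independent proof of the isomorphism, and ``a direct calculation using the Ulrich structure'' does not indicate one. The same circularity recurs in your $(2)\Rightarrow(1)$: you want to read off freeness of $C/\fka C$ from ``the Sally-module structure implied by (iii)'', yet the only known route to that structure already presupposes the freeness. (A symptom that the bookkeeping has not been carried out: $\mu_R(C)=\rmr(R)-1$, not $\rmr(R)$, so the target length is $(\rmr(R)-1)\,\ell_R(R/\fka)$.)

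What your outline lacks is the reduction to dimension one, which is the paper's engine throughout: for the equivalence $(1)\Leftrightarrow(2)$ itself (Theorem~\ref{3.2}, by induction on $d$ with Theorem~\ref{2.3} as base); for the inclusion $I\subseteq\fka$ (Proposition~\ref{3.3}), which your argument for $\fka J=\fka Q$ also needs, since from $\fka I\subseteq\fkq I+f_1\fka$ one must still place $\fkq I$ inside $\fka Q$; and for $J^2/QJ\cong R/\fka$ (Proposition~\ref{3.4}(3)). Only after all this does the paper analyse the Sally module. Finally, super-regularity of $f_2,\ldots,f_d$ for $J$ is a statement about $\operatorname{gr}_J(R)$, not about $C$; the paper proves it by showing $\fkq\cap J^{n+1}=\fkq J^n$ inductively from $J^3=QJ^2$, not by transferring regularity on $C$.
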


The study of $\GGL$ rings is still in progress, and Theorem \ref{1.2} now completely generalizes the corresponding assertion \cite[Theorem 5.1]{GTT} of $\AGL$ rings to arbitrary $\GGL$ rings of higher dimension, certifying that the notion of a $\GGL$ ring is a reasonable generalization of $\AGL$ rings and that a $\GGL$ ring is one of the candidates of broader notion that generalizes Gorenstein rings.

We now briefly explain how this paper is organized. The proof of Theorem \ref{1.2} shall be given in Sections 3 and 4. In Section 2, we summarize some of the known results given by \cite{GK}, which we use to prove Theorem \ref{1.2}. We will explore an example in order to illustrate Theorem \ref{1.2} in Section 5.


\section{Preliminaries}\label{section2}


In this section we summarize some of the results in \cite[Section 4]{GK} about $\GGL$ rings , which we use to prove Theorem \ref{1.2}.

First, let $(R,\m)$ be a Cohen-Macaulay local ring of dimension one, admitting the canonical module $\rmK_R$. Let $I \subsetneq R$ be an ideal of $R$ such that $I \cong \rmK_R$ as an $R$-module. We assume that $I$ contains a parameter ideal $(a)$ of $R$ as a reduction.
We set $$K = \frac{I}{a} = \left\{\frac{x}{a} ~\middle|~ x \in I\right\}$$ in the total ring $\rmQ(R)$ of fractions of $R$. Hence $K$ is a fractional ideal of $R$ such that $R \subseteq K \subseteq \overline{R}$ and $K\cong \rmK_R$, where $\overline{R}$ denotes  the integral closure of $R$ in $\rmQ(R)$. We set $S = R[K]$ in $\rmQ(R)$. Hence $S$ is a module-finite birational extension of $R$. Note that the ring $S=R[K]$ is independent of the choice of canonical ideals $I$ and reductions $(a)$ of $I$ (\cite[Theorem 2.5]{CGKM}). We set $\fkc=R:S$. We then have the following, which shows the $\fkm$-primary ideal $\fka$ which appears in Definition \ref{def1.1} of a $\GGL$ ring $R$ is uniquley determined, when $\dim R = 1$. We note a brief proof.

\begin{proposition}[{\cite{GK}}]\label{2.1}
Suppose $R$ is not  a Gorenstein ring but $R$ is a $\GGL$ ring with respect to an $\fkm$-primary ideal $\fka$ of $R$. Then $\fka = \fkc$.
\end{proposition}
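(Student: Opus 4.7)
My plan is to exploit the fact that $\dim R = 1$ forces the module $C$ in the $\GGL$ exact sequence $0 \to R \xrightarrow{\varphi} \rmK_R \to C \to 0$ to be zero-dimensional. Indeed, by \cite[Lemma 3.1(2)]{GTT} one has $\dim C = d - 1 = 0$, so $C$ has finite length. For a $0$-dimensional $R$-module, $\rme_\fka^0(C) = \ell_R(C)$, so the Ulrich condition $\rme_\fka^0(C) = \ell_R(C/\fka C)$ forces $\fka C = 0$, and the freeness condition gives $C \cong (R/\fka)^r$ as $R/\fka$-modules, where $r \ge 1$ because $R$ is not Gorenstein. In the fractional-ideal setup $R \subseteq K \subseteq \ol{R}$ recalled just above, I would replace $\varphi$ by the natural inclusion $R \hookrightarrow K$ (any other embedding differs by multiplication by a non-zerodivisor and a corresponding rescaling of $K$, which does not change the ring $S = R[K]$ or the conductor $\fkc$), so that $C$ is identified with $K/R$.

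With this identification, the annihilator of $K/R$ in $R$ is precisely the colon ideal $R:K$, so the fact that $K/R$ is a nonzero free $R/\fka$-module yields
$$R:K \ =\ \ann_R(K/R) \ =\ \fka,$$
and in particular $\fka K \subseteq R$. I would then invoke condition (ii) of Definition \ref{def1.1}: the injectivity of the induced map $R/\fka \to K/\fka K$, $\overline{r} \mapsto \overline{r}$, is equivalent to $\fka K \cap R = \fka$. Combined with the containment $\fka K \subseteq R$ just established, this gives the single key equality
$$\fka K \ =\ \fka.$$
Squeezing the two $\GGL$ hypotheses into this equality is the crux of the argument and the step I expect to need the most care, since each hypothesis separately gives only a one-sided containment.

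Once $\fka K = \fka$ is in hand, the conclusion is mechanical: by induction on $n$ one has $\fka K^n = (\fka K) K^{n-1} = \fka K^{n-1} = \cdots = \fka$ for every $n \ge 1$, so
$$\fka \cdot S \ =\ \fka \cdot \bigcup_{n \ge 1} K^n \ \subseteq\ \fka \ \subseteq\ R,$$
which shows $\fka \subseteq R:S = \fkc$. The reverse inclusion is immediate from $K \subseteq S$, giving $\fkc = R:S \subseteq R:K = \fka$. Hence $\fka = \fkc$, as asserted.
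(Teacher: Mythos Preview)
Your proof is correct and follows essentially the same route as the paper's: both extract from the two GGL hypotheses the single equality $\fka K' = \fka$ (the paper writes $\fka L = \fka$ with $L = I/f$, $f = \varphi(1)$), iterate to get $\fka \subseteq R:R[K'] = \fkc$, and finish with $\fkc \subseteq R:K' = \fka$. Your ``rescaling'' remark and the paper's redefinition $S = R[L]$ both lean on the invariance of $R[K]$ recorded in \cite[Theorem~2.5]{CGKM} just before the proposition; note that this invariance is legitimate here only because $\fka K' = \fka$ forces $K' \subseteq \overline{R}$ via the determinantal trick (equivalently, $(f)$ is a reduction of $I$), which the paper states explicitly and you leave implicit.
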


\begin{proof}
We choose an exact sequence
$$
0 \to R \xrightarrow{\varphi} I \to C \to 0
$$
of $R$-modules such that $C$ is an Ulrich $R$-module with respect to $\fka$ and the induced homomorphism $R/\fka \otimes_R \varphi : R/\fka \to I/\fka I$ is injective. We set $f = \varphi(1)$ and identify $C = I/(f)$.  Then $\fka I \subseteq (f)$ since $\fka {\cdot}(I/(f)) = (0)$, while $(f) \cap \fka I =\fka f$ since the homomorphism $R/\fka \otimes_R \varphi$ is injective. Consequently, $\fka I = \fka f$, whence $(f)$ is a reduction of $I$.  We consider $L = \frac{I}{f}$ and set $S = R[L]$. Then $\fka L = \fka$ since $\fka I = \fka f$, so that $\fka S = \fka$ since $S = L^n$ for all $n \gg 0$. Therefore, $\fka \subseteq \fkc = R:S$, so that $\fka = \fkc$, because $\fkc \subseteq R:L = (f):_R I = \fka$.
\end{proof}

In general we have the following.

\begin{fact}[\cite{GK}]\label{2.2}
Let $\fka=R:K$. Then the following conditions are equivalent.
\begin{enumerate} [$(1) $]
\item $K^2=K^3$.
\item $\fka=\fkc$.
\item $\fka K=\fka$.
\end{enumerate}
\end{fact}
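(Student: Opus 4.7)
The plan is to prove the equivalences along the cycle (2)$\Leftrightarrow$(3) and (1)$\Leftrightarrow$(2), leveraging two structural facts the setup provides for free. First, since $a \in I$, one has $1 = a/a \in K$, so $R \subseteq K \subseteq K^{2} \subseteq \cdots$ is ascending with union $S$, and module-finiteness of $S$ over $R$ forces $S = K^{n}$ for all $n \gg 0$. Second, because a canonical ideal exists inside $R$, $R$ is generically Gorenstein, and the classical identity $\End_{R}(K) = K:K = R$ is available throughout the argument.

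The equivalence (2)$\Leftrightarrow$(3) is an elementary colon manipulation. Since $1 \in K$ gives $\fka \subseteq \fka K$, condition (3) is equivalent to $\fka K \subseteq \fka$, equivalently $\fka K^{2} \subseteq R$. Iterating, $\fka K = \fka$ is equivalent to $\fka K^{n} = \fka$ for every $n$, hence to $\fka S = \fka$, hence to $\fka \subseteq R:S = \fkc$. Combined with the trivial inclusion $\fkc \subseteq R:K = \fka$ that comes from $K \subseteq S$, this delivers (3)$\Leftrightarrow$(2).

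The substantive equivalence is (1)$\Leftrightarrow$(2). Using the adjunction $(A:B):C = A:(BC)$ together with $K:K = R$, one computes
$$R:K^{2} \;=\; (K:K):K^{2} \;=\; K:K^{3}, \qquad K:K^{2} \;=\; R:K \;=\; \fka.$$
If $K^{2} = K^{3}$, then $S = K^{2}$, and the first identity yields $\fkc = R:K^{2} = K:K^{3} = K:K^{2} = \fka$. Conversely, if $\fka = \fkc$, the sandwich $\fkc \subseteq R:K^{2} \subseteq R:K = \fka$ collapses to $R:K^{2} = \fka$, so $K:K^{3} = K:K^{2}$. Reading this as $\Hom_{R}(K^{3}, K) = \Hom_{R}(K^{2}, K)$ and invoking reflexivity of $K^{2}$ and $K^{3}$ under the canonical duality $\Hom_{R}(-,K)$ (valid because they are maximal Cohen-Macaulay fractional ideals in dimension one) yields $K^{2} = K^{3}$.

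The main obstacle is this last reflexivity step: the colon bookkeeping only produces the equality of duals $\Hom_{R}(K^{3},K) = \Hom_{R}(K^{2},K)$, and upgrading this back to equality of the modules themselves requires the canonical-duality machinery, which in turn rests on $K:K = R$. The rest of the argument is pure ideal arithmetic once that identity is in hand.
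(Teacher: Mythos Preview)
The paper does not actually prove Fact~\ref{2.2}; it is quoted as a result from \cite{GK} (listed as ``in preparation'') and no argument is supplied here. So there is no ``paper's own proof'' to compare your proposal against. That said, your argument is correct and is essentially the standard one, with one small presentational gap worth tightening.

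In your treatment of $(2)\Leftrightarrow(3)$, the chain ``$\fka K=\fka\Leftrightarrow\fka K^n=\fka$ for all $n\Leftrightarrow\fka S=\fka$, hence $\fka\subseteq R:S=\fkc$'' only justifies the forward direction of the last step. To close the loop you should note explicitly that $\fkc=R:S$ is automatically an ideal of $S$ (if $x\in\fkc$ and $s\in S$ then $(xs)S=x(sS)\subseteq xS\subseteq R$), so once $\fka=\fkc$ you get $\fka S=\fkc S=\fkc=\fka$ and hence $\fka K=\fka$. With that sentence added, $(2)\Leftrightarrow(3)$ is complete.

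Your argument for $(1)\Leftrightarrow(2)$ is correct. The identity $K:K=R$ is the standard fact $\End_R(\rmK_R)\cong R$, and the colon computations $R:K^2=K:K^3$ and $K:K^2=\fka$ follow from it as you wrote. For the converse you correctly reduce to $K:K^3=K:K^2$ and then invoke reflexivity of the fractional ideals $K^2,K^3$ under canonical duality; this is legitimate since $K^2$ and $K^3$ are torsion-free, hence maximal Cohen--Macaulay in dimension one, and the biduality map $L\to K:(K:L)$ is an isomorphism for such $L$. Your identification of this reflexivity step as the one non-formal ingredient is accurate.
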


The key in the theory of one-dimensional $\GGL$ rings is the following, which we shall freely use in the present article. See \cite[Section 4]{GK} for the proof.

\begin{thm}[\cite{GK}]\label{2.3}
Suppose that $R$ is not a Gorenstein ring. Then the following conditions are equivalent.
\begin{enumerate}[$(1)$]
\item $R$ is a $\GGL$ ring $($necessarily with respect to $\fkc$$)$.
\item $K/R$ is a free $R/\fkc$-module.
\item $K/\fkc=K/\fkc K$ is a free $R/\fkc$-module.
\item $S/R$ is a free $R/\fkc$-module.
\item $S/\fkc=S/\fkc S$ is a free $R/\fkc$-module.
\item $\rme_1(I)=\ell_R(R/\fkc){\cdot}\rmr(R)$.
\end{enumerate}
When this is the case,  the following assertions hold true.
\begin{enumerate}[{\rm (i)}]
\item $K^2=K^3$.
\item $R/\fkc$ is a Gorenstein ring.
\item $S/K\cong R/\fkc$.
\end{enumerate}
\end{thm}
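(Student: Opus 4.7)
The plan is to prove the six equivalences and three additional assertions in the following order. First, show that each of (1)–(6) forces $K^2=K^3$ (assertion (i)), so by Fact 2.2 we gain $R:K=\fkc$ and $\fkc K=\fkc$, and everything subsequently takes place inside $\mod R/\fkc$, with $S=R[K]=R+K+K^2=K^2$ since $R\subseteq K$. Extracting $K^2=K^3$ from (3) and (5) is immediate, as these conditions contain $K/\fkc=K/\fkc K$ and $S/\fkc=S/\fkc S$ explicitly. For (2), the freeness of $K/R$ over $R/\fkc$ forces $\ann_R(K/R)=\fkc$, and $\ann_R(K/R)=R:K$, so Fact 2.2 applies. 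For (4), freeness of $S/R$ gives $\fkc S\subseteq R$; since $\fkc S$ is an $S$-ideal contained in $R$, it lies in $R:S=\fkc$, whence $\fkc S=\fkc$, forcing $\fkc K\subseteq\fkc S=\fkc$ and again Fact 2.2 applies. From (1), this is recorded in the proof of Proposition 2.1. The case (6) is deferred to the end.

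The next step is assertion (iii), $S/K\cong R/\fkc$. In dimension one the canonical fractional ideal is reflexive, so $K=R:\fkc$; combining this with $S=K^2$ and the identity $K:\fkc=R:\fkc^2$, the desired isomorphism comes out by direct computation. With (iii) in hand and $\fkc K=\fkc=\fkc S$, the three short exact sequences
\begin{align*}
0&\to R/\fkc\to K/\fkc K\to K/R\to 0,\\
0&\to K/R\to S/R\to S/K\to 0,\\
0&\to R/\fkc\to S/\fkc S\to S/R\to 0
\end{align*}
all live in $\mod R/\fkc$. I would then show each of them splits: the first and third because $1\in K$ is unimodular (if $1\in\fkm K$, Nakayama applied to the nonzero module $K/R$ would give $K=R$, contradicting the non-Gorenstein hypothesis), and the second because its right term $R/\fkc$ is cyclic with annihilator exactly $\fkc$, which also kills $S/R$. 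These splittings give $(2)\Leftrightarrow(3)\Leftrightarrow(4)\Leftrightarrow(5)$ and pin the $R/\fkc$-free ranks to $\rmr(R)-1$, $\rmr(R)$, $\rmr(R)$, and $\rmr(R)+1$, respectively.

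For $(1)\Leftrightarrow(2)$, from (2) the natural inclusion $R\hookrightarrow K$ presents $R$ as GGL with respect to $\fkc$: the cokernel $K/R$ is free over $R/\fkc$ and $\fkc$-annihilated, hence Ulrich, while the injectivity of $R/\fkc\to K/\fkc K$ is the splitting of the first displayed sequence. Conversely, given an embedding $1\mapsto f$ witnessing (1), the proof of Proposition 2.1 shows $f$ is a reduction of $K$ and that $L=K/f$ is a canonical fractional ideal with $R\subseteq L\subseteq\overline{R}$, $R[L]=S$, $\fkc L=\fkc$, and Ulrich cokernel $L/R$; the analysis above is intrinsic to $(R,\fkc)$, so (2) transfers back from $L$ to $K$. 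Assertion (ii), that $R/\fkc$ is Gorenstein, then follows by adjunction: $K_{R/\fkc}=\Ext^1_R(R/\fkc,K)\cong(R:\fkc^2)/(R:\fkc)=S/K\cong R/\fkc$ by (iii), so the canonical module of $R/\fkc$ is free of rank one over itself.

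The equivalence with (6) rests on the length formula $\rme_1(I)=\ell_R(S/R)$, valid unconditionally here because $I=aK$ and $K^{n+1}=S$ for $n\gg0$ give $\ell_R(R/I^{n+1})=(n+1)\ell_R(R/(a))-\ell_R(S/R)$. Under (4), the freeness of $S/R$ over $R/\fkc$ of rank $\rmr(R)$ immediately yields (6). The main obstacle, and the only step needing genuine module-theoretic input beyond the splittings, is the converse $(6)\Rightarrow(4)$: from a single length equality one must extract both $K^2=K^3$ and the freeness of $S/R$. My plan is to establish, in full generality, the inequality $\ell_R(S/R)\le\rmr(R){\cdot}\ell_R(R/\fkc)$ via a Nakayama analysis of the filtration of $S/R$ by $\fkc$-powers, controlling $\mu_R(S/R)$ by $\rmr(R)$ (using $S/R\hookrightarrow$ some direct sum of copies of $K/R$), with equality forcing $S/R$ to be $R/\fkc$-free. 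This closes the cycle via $(4)\Rightarrow(1)$.
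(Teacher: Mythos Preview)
First, note that the paper itself does not prove Theorem~2.3; it is quoted from \cite{GK} with the remark ``See \cite[Section 4]{GK} for the proof.'' So there is no in-paper argument to compare against, and your proposal must be assessed on its own merits.

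There is a genuine error in your derivation of assertion (iii). You assert that ``the canonical fractional ideal is reflexive, so $K=R:\fkc$,'' but reflexivity of the canonical module is the statement $K:(K:M)=M$, not $R:(R:K)=K$. In fact, once $K^2=K^3$ holds one has $\fkc K=\fkc$, and then
\[
R:\fkc=(K:K):\fkc=K:\fkc K=K:\fkc=K:(K:S)=S,
\]
so $R:\fkc=S\supsetneq K$ (strict because $R$ is not Gorenstein). This collapses your ``direct computation'' of $S/K$, and with it your proof of (ii) and the splitting of the middle short exact sequence, on which the equivalence $(2)\Leftrightarrow(4)$ rests. A repair is available: applying $\Hom_R(-,K)$ to $0\to R\to K\to K/R\to 0$ gives $(K/R)^{\vee}\cong K/R$; under (2) this says $(R/\fkc)^{\rmr(R)-1}$ is Matlis self-dual over $R/\fkc$, and Krull--Schmidt forces $R/\fkc\cong E_{R/\fkc}(k)$, which is (ii). Then $\rmK_{R/\fkc}=\Ext^1_R(R/\fkc,K)\cong(K:\fkc)/(K:R)=S/K$ gives (iii). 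Your plan for $(6)\Rightarrow(4)$ is also too thin: the bound $\mu_R(S/R)\le\rmr(R)$ is not evident, and the suggested embedding of $S/R$ into copies of $K/R$ is unexplained. A cleaner route is to apply $\Hom_R(-,K)$ to $0\to R\to S\to S/R\to 0$, obtaining $(S/R)^{\vee}\cong K/(K:S)=K/\fkc$; hence $\rme_1(I)=\ell_R(S/R)=\ell_R(K/\fkc)=\ell_R(R/\fkc)+\ell_R(K/R)$. Since $K/R$ is an $R/(R:K)$-module with $\mu_R(K/R)=\rmr(R)-1$ and $R:K\supseteq\fkc$, one gets $\ell_R(K/R)\le(\rmr(R)-1)\,\ell_R(R/\fkc)$, so $\rme_1(I)\le\rmr(R)\,\ell_R(R/\fkc)$ in general; equality forces both $R:K=\fkc$ (hence $K^2=K^3$ by Fact~2.2) and freeness of $K/R$ over $R/\fkc$, which is (2).
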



Let us note here the non-zerodivisor characterization given by \cite{GK} for $\GGL$ rings of higher dimension. To state it, let $(R,\m)$ be a Cohen-Macaulay local ring with $d=\dim R >0$ and infinite residue class field, possessing the canonical module $\rmK_R$. We then have the following.

\begin{thm}[\cite{GK}]\label{2.4}
Suppose that $R$ is not a Gorenstein ring.
Then the following assertions hold true.
\begin{enumerate}
\item[$(1)$] Let $R$ be a $\GGL$ ring with respect to $\fka$ and $d \ge 2$. Consider the exact sequence
$$
0 \to R \xrightarrow{\varphi} \rmK_R \to C \to 0
$$
of $R$-modules such that $C$ is an Ulrich $R$-module with respect to $\fka$ and the induced homomorphism $R/\fka \otimes_R\varphi : R/\fka \to \rmK_R/\fka \rmK_R$ is injective. Choose a superficial element $f\in \fka$ for $C$ with respect to $\fka$ and assume that $f$ is $R$-regular. Then $R/(f)$ is a $\GGL$ ring with respect to $\fka/(f)$, possessing
$$
0 \to R/(f) \xrightarrow{R/(f) \otimes_R \varphi} \rmK_R/f\rmK_R \to C/fC \to 0
$$ to be a defining exact sequence.
\item[$(2)$] Let $f\in \fkm$ be $R$-regular and assume that $R/(f)$ is a $\GGL$ ring with respect to $[\fka+(f)]/(f)$. Then $R$ is a $\GGL$ ring with respect to $\fka+(f)$ and $f\not\in \fkm [\fka + (f)]$.
\end{enumerate}
\end{thm}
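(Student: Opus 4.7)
For part (1), my plan is to tensor the defining short exact sequence $0 \to R \xrightarrow{\varphi} \rmK_R \to C \to 0$ with $R/(f)$. Since $f \in \fka$ is superficial for the Cohen--Macaulay module $C$ of positive dimension $d-1$, it is $C$-regular, so $\Tor_1^R(R/(f), C) = (0 :_C f) = 0$; combined with $f$ being $R$-regular, this produces the induced short exact sequence $0 \to R/(f) \to \rmK_R/f\rmK_R \to C/fC \to 0$, whose middle term is the canonical module of $R/(f)$. I then verify the Ulrich property of $C/fC$ with respect to $\bar\fka = \fka/(f)$: Cohen--Macaulayness follows from $C$-regularity of $f$, while the superficial reduction $\rme^0_{\bar\fka}(C/fC) = \rme^0_\fka(C)$, combined with $\rme^0_\fka(C) = \ell_R(C/\fka C)$ and $(C/fC)/\bar\fka(C/fC) \cong C/\fka C$, yields the required multiplicity equality. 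The freeness of $C/\fka C$ over $R/\fka$ is part of the hypothesis, and injectivity of the induced map is automatic because its target $(\rmK_R/f\rmK_R)/\bar\fka(\rmK_R/f\rmK_R)$ canonically agrees with $\rmK_R/\fka \rmK_R$ and the map coincides with $R/\fka \otimes \varphi$.

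For part (2), the plan is to lift $\bar\varphi$. Fix any preimage $\xi \in \rmK_R$ of $\bar\varphi(\bar 1)$ and define $\varphi \colon R \to \rmK_R$ by $\varphi(1) = \xi$. The $R$-regular element $f$ is also $\rmK_R$-regular, since $\Ass_R \rmK_R \subseteq \Ass_R R$ in the Cohen--Macaulay setting. Applying the snake lemma to the $3 \times 3$ diagram with horizontal rows $0 \to (-) \xrightarrow{f} (-) \to (-)/f(-) \to 0$ and vertical maps $\varphi, \varphi, \bar\varphi$ yields an exact sequence
$$\ker\varphi \xrightarrow{f} \ker\varphi \to \ker\bar\varphi \to C \xrightarrow{f} C \to \bar C \to 0,$$
where $C = \Coker\varphi$ and $\bar C = \Coker\bar\varphi$. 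Since $\ker\bar\varphi = 0$, Nakayama forces $\ker\varphi = 0$, and the remaining segment shows $f$ is $C$-regular with $C/fC = \bar C$. Consequently, $C$ is Cohen--Macaulay of dimension $d-1$. Lifting a reduction $\bar\fka \bar C = (\bar f_2, \ldots, \bar f_{d-1}) \bar C$ to elements $f_i \in \fka$ yields $\fkb C = (f, f_2, \ldots, f_{d-1}) C$ with $\fkb = \fka + (f)$; these $d-1$ elements form a $C$-regular system of parameters, so $\rme^0_\fkb(C) = \ell_R(C/\fkb C)$, and $C/\fkb C \cong \bar C/\bar\fka \bar C$ is $R/\fkb$-free, establishing that $C$ is Ulrich with respect to $\fkb$. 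Injectivity of $R/\fkb \otimes \varphi$ is inherited from the defining injectivity of $R/(f) \otimes \bar\varphi$ via the identification $\rmK_R/\fkb\rmK_R \cong \rmK_{R/(f)}/\bar\fka \rmK_{R/(f)}$.

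The remaining assertion $f \notin \fkm\fkb$ splits into two cases. If $f \notin \fka$, then $f \in \fkm\fka + \fkm f$ would give $(1-m)f \in \fkm\fka \subseteq \fka$ for some $m \in \fkm$, forcing $f \in \fka$, a contradiction. If $f \in \fka$, so that $\fkb = \fka$, then supposing $f \in \fkm\fka$ yields $fC \subseteq \fkm\fka C$, whence $\fka C = (f_2, \ldots, f_{d-1}) C + \fkm \fka C$; Nakayama implies $\fka C = (f_2, \ldots, f_{d-1}) C$, so $C/(f_2, \ldots, f_{d-1}) C = C/\fka C$ has finite length, making $(f_2, \ldots, f_{d-1})$ a system of parameters for $C$ and forcing $\dim C \le d-2$, contradicting $\dim C = d-1$. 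The most delicate point to anticipate is precisely this dimension-count argument for $f \notin \fkm \fkb$, together with the parallel lifting step that produces the exact length-$(d-1)$ reduction of $\fkb C$ on $C$; everything else is standard bookkeeping around the snake lemma, superficial reduction of multiplicities, and canonical modules modulo regular elements.
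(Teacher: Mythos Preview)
The paper does not contain its own proof of Theorem \ref{2.4}; the result is quoted from \cite{GK} and only used as a black box in Sections~3 and~5. So there is nothing in this article to compare your argument against line by line.

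That said, your proposal is the standard and correct way to establish such a non-zerodivisor characterization, and it is almost certainly what \cite{GK} does. For part~(1), the only points worth watching are that superficiality of $f$ on the positive-dimensional Cohen--Macaulay module $C$ indeed forces $f$ to be $C$-regular (so the tensored sequence stays exact), and that $f\in\fka$ makes the identification $\rmK_R/\fka\rmK_R=(\rmK_R/f\rmK_R)/\overline{\fka}(\rmK_R/f\rmK_R)$ automatic, so the injectivity condition passes down verbatim. Both are handled correctly.

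For part~(2), the snake-lemma/Nakayama step is clean. Two small remarks: (i) you should note explicitly that $\overline C\ne 0$ (because $R/(f)$ is not Gorenstein), so that $\dim\overline C=d-2$ is available and the lift $C$ really has dimension $d-1$; (ii) when you lift the generators $\overline f_2,\ldots,\overline f_{d-1}$ of a minimal reduction of $\overline\fka$ on $\overline C$, observe that $[\fka+(f)]/(f)=\fka/(\fka\cap(f))$, so lifts can be chosen inside $\fka$ itself, which you use. Your final dimension count for $f\notin\fkm\fkb$ is correct and also works in the boundary case $d=2$ (no $f_i$'s, Nakayama gives $\fka C=0$, whence $\dim C=0<d-1$). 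Nothing is missing.
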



\section{Proof of the main part of Theorem \ref{1.2}}
Let $(R,\m)$ be a Cohen-Macaulay local ring with $d=\dim R >0$ and infinite residue class field, possessing the canonical module $\rmK_R$. Let $I \subsetneq R$ be an ideal of $R$ such that $I \cong \rmK_R$ as an $R$-module. We choose a parameter ideal $Q = (f_1, f_2, \ldots, f_d)$ of $R$ so that $f_1 \in  I$. We set $\fkq=(f_2, f_3, \ldots, f_d)$ and $J = I + \fkq$. Let $\fka$ be an $\m$-primary ideal of $R$. In Sections 3 and 4 we throughout assume that $R$ is not a Gorenstein ring. The purpose is to prove the equivalence between Conditions (1) and (2) in Theorem \ref {1.2}.

Let us begin with the following.

\begin{proposition}\label{3.1}
$\fkq \cap I = \fkq I$ and $J \ne Q$.
\end{proposition}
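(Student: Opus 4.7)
The plan is to establish the two conclusions in sequence, as the equality $\fkq \cap I = \fkq I$ will serve as the main tool for the inequality $J \ne Q$. Both rest on the preliminary observation that $R/I$ is Cohen-Macaulay of dimension $d-1$: since $I \cong \rmK_R$ is a maximal Cohen-Macaulay $R$-module, $\depth I = d$, and the depth lemma applied to $0 \to I \to R \to R/I \to 0$ forces $\depth(R/I) \ge d-1$; on the other hand $f_1 \in I$ is $R$-regular, so $\dim(R/I) \le d-1$, and the combined inequalities $d-1 \le \depth(R/I) \le \dim(R/I) \le d-1$ give the claim.

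Granted this, because $Q$ is $\fkm$-primary and $f_1 \in I$, the images of $f_2, \ldots, f_d$ in $R/I$ generate an $\fkm/I$-primary ideal, hence form a system of parameters on $R/I$, and therefore a regular sequence on $R/I$ by Cohen-Macaulayness. Since $f_2, \ldots, f_d$ is also $R$-regular, the Koszul complex on $f_2, \ldots, f_d$ is a free resolution of $R/\fkq$ whose Koszul homology against $R/I$ vanishes in positive degrees; in particular $\Tor_1^R(R/\fkq, R/I) = 0$. Tensoring $0 \to I \to R \to R/I \to 0$ with $R/\fkq$ and using $\Tor_1^R(R/\fkq, R) = 0$, the resulting long exact sequence identifies $(\fkq \cap I)/\fkq I$ with $\Tor_1^R(R/\fkq, R/I)$, yielding $\fkq \cap I = \fkq I$.

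For $J \ne Q$, I would argue by contradiction. If $J = Q$, then $I \subseteq Q = (f_1) + \fkq$, so every $x \in I$ decomposes as $x = f_1 a + b$ with $a \in R$ and $b \in \fkq$. Since $f_1 \in I$, this forces $b = x - f_1 a \in I \cap \fkq = \fkq I \subseteq \fkm I$, whence $I \subseteq (f_1) + \fkm I$. Nakayama's lemma then gives $I = (f_1)$, which would make $\rmK_R$ principal with a nonzerodivisor generator and hence $R$ Gorenstein, contradicting our standing assumption. The principal obstacle is the preliminary verification that $R/I$ is Cohen-Macaulay; the remainder of the argument reduces to standard Koszul and Tor bookkeeping together with a single application of Nakayama, and the critical depth bound $\depth I = d$ used there rests essentially on $I \cong \rmK_R$ rather than on the weaker fact that $I$ contains a nonzerodivisor.
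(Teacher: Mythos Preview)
Your proof is correct and follows essentially the same approach as the paper. The paper's argument is terser: it invokes directly that $R/I$ is Gorenstein (a standard consequence of $I \cong \rmK_R$, since $\rmK_{R/I} \cong \Ext^1_R(R/I,\rmK_R) \cong R/I$), whereas you establish only the weaker Cohen--Macaulay property via the depth lemma; either way one concludes that $f_2,\ldots,f_d$ is regular on $R/I$, and from there both proofs obtain $\fkq \cap I = \fkq I$ and then deduce $I=(f_1)$ by Nakayama under the assumption $J=Q$.
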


\begin{proof}
We get $\fkq \cap I = \fkq I$, since $\fkq$ is a parameter ideal of the Gorenstein ring $R/I$. If $J=Q$, then $I = Q \cap I = (f_1) + (\fkq \cap I)$, whence $I = (f_1)$ by the first equlaity. Therefore, $R$ is a Gorenstein ring, which is impossible.
\end{proof}

\begin{thm}\label{3.2}
The following conditions are equivalent.
\begin{enumerate}[{\rm (1)}]
\item $\fka = Q :_RJ$, $\fka J = \fka Q$, and $\rme_J^1(R)=\ell_R(R/\fka){\cdot}\rmr(R)$.
\item $\fka J = \fka Q$ and the $R/\fka$-module $J/Q$ is free.
\end{enumerate}
When this is the case, $R/\fkq$ is a $\GGL$ ring with respect to $\fka/\fkq$, whence so is the ring $R$ with respect to $\fka$.
\end{thm}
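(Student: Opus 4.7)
The plan is to reduce to dimension one by modding out by $\fkq=(f_2,\ldots,f_d)$, invoke the one-dimensional characterization in Theorem 2.3, and lift the conclusion via iterated application of Theorem 2.4 (2). To initiate the reduction, I first show $\fkq\subseteq\fka$ in both (1) and (2): in (2) the $R$-module $J/Q$ is nonzero by Proposition 3.1 and free over $R/\fka$, so its annihilator $\fka$ equals $Q:_RJ$, while in (1) this identity is assumed; in either case $\fkq J\subseteq\fkq\subseteq Q$ gives $\fkq\subseteq Q:_RJ=\fka$. Put $\overline{R}=R/\fkq$, $\overline{I}=I\overline{R}$, $\overline{f_1}=f_1\bmod\fkq$, and $\overline{\fka}=\fka/\fkq$. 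Using $I\cap\fkq=\fkq I$ (Proposition 3.1), $\overline{R}$ is a one-dimensional Cohen--Macaulay local ring whose canonical ideal is $\overline{I}$, and an elementary modular computation gives
\[
J/Q\ \cong\ I/[(f_1)+\fkq I]\ \cong\ \overline{I}/(\overline{f_1})\ \cong\ \overline{K}/\overline{R},
\]
where $\overline{K}=\overline{I}/\overline{f_1}\subseteq \rmQ(\overline{R})$ is the canonical fractional ideal of $\overline{R}$.

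Next, I translate $\fka J=\fka Q$ to $\overline{R}$. It is equivalent to $\fka I\subseteq\fka f_1+\fka\fkq$, which reduced modulo $\fkq$ yields $\overline{\fka}\,\overline{I}=\overline{f_1}\,\overline{\fka}$, hence $\overline{\fka}\,\overline{K}=\overline{\fka}$. A direct computation shows
\[
\overline{R}:_{\overline{R}}\overline{K}\ =\ \overline{f_1}\overline{R}:_{\overline{R}}\overline{I}\ =\ (Q:_RJ)/\fkq\ =\ \overline{\fka},
\]
so Fact 2.2 applied to $\overline{R}$ yields $\overline{\fka}=\overline{\fkc}$ (the conductor $\overline{R}:_{\overline{R}}\overline{S}$ with $\overline{S}=\overline{R}[\overline{K}]$) together with $\overline{K}^{2}=\overline{K}^{3}$; in particular $(\overline{f_1})$ is a reduction of $\overline{I}$, so the full setup of Theorem 2.3 is in force for $\overline{R}$. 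Noting that $\rmr(R)=\rmr(\overline{R})$ and $\ell_R(R/\fka)=\ell_{\overline{R}}(\overline{R}/\overline{\fkc})$, the equivalence (1)$\Leftrightarrow$(2) of Theorem 3.2 becomes, via the identification $J/Q\cong\overline{K}/\overline{R}$, precisely the equivalence (2)$\Leftrightarrow$(6) of Theorem 2.3 in $\overline{R}$, modulo the input discussed below.

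The main obstacle is the transfer of the first Hilbert coefficient $\rme_J^1(R)=\rme_1(\overline{I})$, which is not automatic when modding out by an arbitrary regular sequence; it requires $f_2,\ldots,f_d$ to be a super-regular sequence for $J$, i.e., that the initial forms $f_2^\ast,\ldots,f_d^\ast$ form a regular sequence in $\operatorname{gr}_J(R)$. I would establish this by leveraging $\fka J=\fka Q$ together with $\fkq\subseteq\fka$ to show inductively that $J^{n+1}\cap(f_2,\ldots,f_i)R=(f_2,\ldots,f_i)J^n$ for all $n\ge 0$ and $2\le i\le d$, exploiting the Cohen--Macaulay hypothesis and the freeness of $J/Q$ over $R/\fka$ to constrain the relations among the $f_i$. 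Once this is in hand, Theorem 2.3 forces $\overline{R}$ to be a $\GGL$ ring with respect to $\overline{\fkc}=\overline{\fka}$, and peeling off $f_d,f_{d-1},\ldots,f_2$ by iterated application of Theorem 2.4 (2) (all feasible since each $f_i\in\fka$) promotes this to $R$ being a $\GGL$ ring with respect to $\fka$, giving the furthermore statement.
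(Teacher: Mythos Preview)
Your reduction to $\overline{R}=R/\fkq$, the identification $J/Q\cong\overline{K}/\overline{R}$, and the appeal to Fact~2.2 and Theorem~2.3 in dimension one are all correct and match the paper's treatment of the base case. The gap is exactly where you flag it: the transfer $\rme_J^1(R)=\rme^1_{\overline I}(\overline R)$. Your proposed fix---proving that $f_2,\ldots,f_d$ is a super-regular sequence for $J$ by an induction that uses, among other things, the freeness of $J/Q$ over $R/\fka$---is circular in the direction $(1)\Rightarrow(2)$, since freeness of $J/Q$ is precisely the conclusion you are after. Even in the direction $(2)\Rightarrow(1)$, establishing $J^{n+1}\cap\fkq=\fkq J^n$ for all $n$ from the bare hypotheses is not elementary: in the paper this is Proposition~3.5, whose proof requires $J^3=QJ^2$ (Proposition~3.4\,(6)), which in turn rests on the structure of $L=Q:_R\fka$ developed only \emph{after} the equivalence of Theorem~3.2 is already in hand.

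The paper sidesteps the obstacle by not insisting that the original $f_2,\ldots,f_d$ be superficial for $J$. Since $R/\fkm$ is infinite and $Q$ is a reduction of $J$, one re-chooses generators $Q=(h_1,\ldots,h_d)$ with $h_1\in I$ and $h_2$ superficial for $R$ with respect to $J$; then $\rme_J^1(R)=\rme_{J/(h_2)}^1(R/(h_2))$ is automatic, and both implications follow by induction on $d$, modding out one superficial element at a time rather than all of $\fkq$ at once. The super-regularity of $f_2,\ldots,f_d$ with respect to $J$ is then obtained afterward as a \emph{consequence} of the theorem (Proposition~3.5), not as an input to its proof.
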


\begin{proof} We may assume $\fka J = \fka Q$. Hence, $Q$ is a reduction of $J$. Because  $J/Q \ne (0)$ by Proposition  \ref{3.1}, we get $\fka=Q:_RJ$, once $J/Q$ is $R/\fka$-free. Consequently, we may also assume that $\fka = Q:_RJ$. First, suppose that $d=1$. Hence $J = I$. We set $K = \frac{I}{f_1}$ in the total ring of fractions of $R$. Then, since $\fka K = \fka$, we have $\fka = R : R[K]$ by Fact \ref{2.2}. Consequently, by Theorem \ref{2.3}, $R$ is a $\GGL$ ring (necessarily with respect to $\fkc$; see Proposition \ref{2.1}) if and only if $\rme_I^1(R) = \ell_R(R/\fka){\cdot}\rmr(R)$. By Theorem \ref{2.3}, the former condition is also equivalent to saying that $I/Q ~(\cong K/R)$ is a free $R/\fka$-module, whence the equivalence of Conditions (1) and (2) follows.

Let us consider the case where $d \ge 2$. Assume that Condition (2) is satisfied. Let us check that $\overline{R}=R/\fkq$ is a $\GGL$ ring.  Set $$\overline{R} = R/\fkq, \ \  \overline{Q}= Q/\fkq, \ \ \overline{J}= J/\fkq,\ \ \text{and}\ \ \overline{\fka}= \fka/\fkq.
$$ We then have $\overline{J}=(I + \fkq)/\fkq \cong I/\fkq I = \rmK_{\overline{R}}$, since $I \cong \rmK_R$ and $f_2, f_3, \ldots, f_d$ is a regular sequence for the $R$-module $I$. Consequently, because $\overline{\fka}{\cdot}\overline{J}=\overline{\fka}{\cdot}\overline{Q}$ and $\overline{J}/\overline{Q}$ is $\overline{R}/\overline{\fka}$-free, from the case of $d=1$ it follows that $\overline{R}$ is a $\GGL$ ring (Fact \ref{2.2} and Theorem \ref{2.3}), whence so is $R$ with respect $\fka$ (Theorem \ref{2.4} (2)).

We now assume that the implication (2) $\Rightarrow$ (1) holds true for $d-1$. Since $Q$ is a reduction of $J$ and the field $R/\fkm$ is infinite, there exist elements $h_1, h_2, \ldots, h_d \in Q$ such that $\mathrm{(i)}$ $h_1 \in I$, $\mathrm{(ii)}$ $Q=(h_1, h_2, \ldots, h_d)$, and $\mathrm{(iii)}$ $h_2$ is superficial for $R$ with respect to $J$. This time, we consider the ring $\overline{R} = R/(h_2)$ and let $\overline{*}$ denote the reduction $\mod~(h_2)$. Then $\overline{I} = [I+(h_2)]/(h_2) \cong I/h_2I = \rmK_{\overline{R}}$ and $\overline{h_1} \in \overline{I}$. Condition (2) is clearly satisfied for the ring $\overline{R}$ as for the ideals $\overline{\fka}$, $\overline{Q}$, and $\overline{J}$. Therefore,  by the hypothesis of induction on $d$ we get $$\rme_{\overline{J}}^1(\overline{R})=\ell_{\overline{R}}\left(\overline{R}/\overline{\fka}\right){\cdot}\rmr(\overline{R}).$$ Consequently, $\rme_J^1(R) = \ell_R(R/\fka){\cdot}\rmr(R)$, because $\rme_J^1(R) = \rme_{\overline{J}}^1(\overline{R})$ (remember that $h_2$ is superficial for $R$ with respect to $J$).

The reverse implication (1) $\Rightarrow$ (2) also follows by induction on $d$, chasing the above argument in the opposite direction.
\end{proof}

Let us suppose, with the same notation as in Theorem \ref{3.2}, that the equivalent conditions of Theorem \ref{3.2} are satisfied. Then we get the following.

\begin{proposition}\label{3.3}
$J \subseteq \fka$. Hence $J^2 \subseteq Q$.
\end{proposition}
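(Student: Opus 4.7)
The plan is to reduce the statement to dimension one, where Proposition~\ref{2.1} and Theorem~\ref{2.3} make everything transparent.

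First I would observe that, since $\fka = Q :_R J$, the desired inclusion $J \subseteq \fka$ is equivalent to $J^2 \subseteq Q$; this will also give the second claim for free via $J^2 \subseteq \fka J = \fka Q \subseteq Q$ once $J \subseteq \fka$ is known. Writing $J = I + \fkq$ and expanding, we have $J^2 = I^2 + I\fkq + \fkq^2$, and the last two summands already lie in $\fkq \subseteq Q$, so the whole matter reduces to showing $I^2 \subseteq Q$.

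Next I would pass to $\overline{R} = R/\fkq$, a one-dimensional Cohen--Macaulay local ring which, by the final assertion of Theorem~\ref{3.2}, is a $\GGL$ ring with respect to $\overline{\fka} := \fka/\fkq$; moreover $\overline{I} \cong \rmK_{\overline{R}}$ and $\overline{Q} = (\overline{f_1})$ is a reduction of $\overline{J} = \overline{I}$ (this follows by reducing $\fka J = \fka Q$ modulo $\fkq$). Setting $\overline{K} = \overline{I}/\overline{f_1}$ and $\overline{S} = \overline{R}[\overline{K}]$ inside $\rmQ(\overline{R})$, Theorem~\ref{2.3}(i) gives $\overline{K}^2 = \overline{K}^3$, whence $\overline{S} = \overline{K}^n$ for every $n \ge 2$, and in particular $\overline{S} = \overline{K}^2$. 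Proposition~\ref{2.1} identifies $\overline{\fka} = \overline{R} : \overline{S}$.

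The key observation is then the trivial membership $f_1 \in Q \subseteq Q :_R J = \fka$, so $\overline{f_1} \in \overline{\fka}$ and hence $\overline{f_1}\,\overline{S} \subseteq \overline{R}$. Since $\overline{f_1}\,\overline{S} = \overline{f_1}\,\overline{K}^2 = \overline{I}^2/\overline{f_1}$ as fractional ideals of $\overline{R}$, this yields $\overline{I}^2 \subseteq (\overline{f_1})$, i.e., $I^2 \subseteq (f_1) + \fkq = Q$, as required. The main (minor) obstacle is just spotting the intermediate identity $\overline{S} = \overline{K}^2$; once that is in hand, combined with $f_1 \in Q \subseteq \fka$, the dimension-one case collapses and the rest is bookkeeping.
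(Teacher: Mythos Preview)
Your proof is correct and follows essentially the same route as the paper: reduce modulo $\fkq$ to the one-dimensional ring $\overline{R}$, invoke Proposition~\ref{2.1} to identify $\overline{\fka}$ with the conductor $\overline{R}:\overline{S}$, and use that $\overline{f_1}$ lies in this conductor. The only cosmetic difference is that the paper argues directly that $\overline{I}=\overline{f_1}\,\overline{K}\subseteq\overline{\fkc}$ (using that $\overline{\fkc}$ is an $\overline{S}$-ideal containing $\overline{f_1}$), thereby obtaining $J\subseteq\fka$ immediately, whereas you pass through $\overline{S}=\overline{K}^2$ to get $\overline{I}^2\subseteq(\overline{f_1})$ and then translate back to $J\subseteq\fka$ via $\fka=Q:_RJ$.
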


\begin{proof}
Suppose $d=1$. We set $K = \frac{I}{f_1}$, $S = R[K]$, and $\fkc = R:S$. Then since $R$ is a $\GGL$ ring with respect $\fka$, we get $\fka = \fkc$ by Proposition \ref{2.1} and $\fkc = R:K$ (see Fact \ref{2.2} and Theorem \ref{2.3}). Therefore, $I \subseteq \fkc$, because  $\fkc$ is an ideal of $S$ and $f_1 \in \fkc$ (note that $I= f_1 K \subseteq R$). If $d > 1$, then passing to $R/\fkq$, we have $J/\fkq \subseteq \fka/\fkq$, whence $J \subseteq \fka$. Therefore $J^2 \subseteq Q$, because $\fka = Q:_RJ$.
\end{proof}

We are now ready to prove the equivalence of Conditions (1) and (2) in Theorem \ref{1.2}.

\begin{proof}[Proof of the main part in Theorem {\rm \ref{1.2}}]
See Theorem \ref{3.2} for the implication (2) $\Rightarrow$ (1). To see the implication (1) $\Rightarrow$ (2), we consider the exact sequence
$$(\sharp) \ \ \ 0 \to R \xrightarrow{\varphi} I \to C \to 0$$
of $R$-modules such that $C$ is an Ulrich $R$-module with respect to $\fka$ and the induced homomorphism $R/\fka \otimes_R \varphi : R/\fka \to I/\fka I$ is injective. Let $f_1 = \varphi(1) \in I$. Then $f_1$ is a non-zerodivisor of $R$. Choose elements $f_2, f_3, \ldots, f_d \in \fka$ so that in the ring $R'=R/(f_1)$ these elements  generate a reduction of $\fka{\cdot} R'$. Then $f_1, f_2, \ldots, f_d$ is a system of parameters of $R$ with $f_1 \in I$. Set $\fkq = (f_2, f_3, \ldots, f_d)$, $Q = (f_1)+ \fkq$, and $J = I + \fkq$. Then, $\fkq$ is a parameter ideal of $R/I$, and because $$\ell_{R'}(C/\fkq C)=\rme_{\fka {\cdot} R'}^0(C) = \rme_{\fka}^0(C) = \ell_R(C/\fka C),$$
 we get $\fka C = \fkq C$. Therefore, since $\fka^n C = \fkq^n C$ for all $n \in \Bbb Z$, $f_2, f_3, \ldots, f_d$ forms a super-regular sequence for $C$ with respect to $\fka$, whence it is a superficial sequence for $C$ with respect to $\fka$. We set $\overline{R} = R/\fkq$, $\overline{I} = I/\fkq I$, and $\overline{C} = C/\fkq C$. Then, from exact sequence ($\sharp$) we get the exact sequence
$$
(\overline{\sharp}) \ \ \ 0 \to \overline{R} \xrightarrow{\overline{\varphi}} \overline{I} \to \overline{C} \to 0
$$
of $\overline{R}$-modules. By Theorem \ref{2.4} (1) the ring $\overline{R}$ is a $\GGL$ ring with respect to $\fka/\fkq$, possessing sequence $(\overline{\sharp})$ to be a defining exact sequence. Consequently, because
$$
J/\fkq =(I+\fkq)/\fkq \cong \overline{I} = \rmK_{\overline{R}},
$$
by Theorem \ref{2.3} it follows that $\overline{\fka}{\cdot}\overline{J} = \overline{\fka}{\cdot}\overline{Q}$ and $\overline{J}/\overline{Q}$ is $\overline{R}/\overline{\fka}$-free, where
$$\overline{\fka}=\fka/\fkq, \ \overline{J} = J/\fkq,\ \text{and}\ \overline{Q} = Q/\fkq.$$
Hence $J/Q$ is $R/\fka$-free and $\fka J \subseteq \fka Q + \fkq$. Therefore $$\fka J = (\fka Q + \fkq) \cap \fka J = \fka Q + [\fkq \cap \fka J] =\fka Q + [\fkq \cap \fka I] \subseteq \fka Q + [\fkq \cap  I] = \fka Q +\fkq I,
$$
where the third equality follows from the fact that $\fka J = \fka I +\fka \fkq$. Hence $\fka J = \fka Q$, because $I \subseteq \fka$  by Proposition \ref{3.3}. Thus Theorem \ref{3.2} certifies that Condition (2) in Theorem \ref{1.2} is satisfied for the ideals $\fka$, $Q$, and $J$. This completes the proof of the equivalence of Conditions (1) and (2) in Theorem \ref{1.2}.
\end{proof}

\section{Proof of the last assertions of Theorem \ref{1.2}}
Let us show the last assertions of Theorem \ref{1.2}. In what follows, assume that our ideals  $Q$ and $J$ satisfy the equivalent conditions in Theorem \ref{3.2}. Hence $R$ (resp. $R/\fkq$) is a $\GGL$ ring with respect to $\fka$ (resp. $\fka/\fkq$), and $R/\fka$ is a Gorenstein ring by Theorem \ref{2.3} (ii). To prove the last assertions of Theorem \ref{1.2}, we need some preliminaries. Let us  maintain the same notation as in the proof of Theorem \ref{3.2}.

We begin with the following.

\begin{lemma}\label{3.0}
$\fkq \cap J^2 = \fkq J$.
\end{lemma}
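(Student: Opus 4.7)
The plan is to prove $\fkq J \subseteq \fkq \cap J^2$ trivially and then establish the reverse inclusion by a modular-law argument, reducing everything to Proposition \ref{3.1}. I would not use induction on $d$ here, nor would I try to invoke the super-regularity statement (c) of Theorem \ref{1.2}, since that will be derived later in the paper.

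First I would rewrite $J^2$ using $J = I + \fkq$. Expanding,
\[
J^2 = (I+\fkq)^2 = I^2 + I\fkq + \fkq^2 = I(I+\fkq) + \fkq^2 = IJ + \fkq^2.
\]
Then, because $\fkq^2 \subseteq \fkq$, the modular law gives
\[
\fkq \cap J^2 = \fkq \cap (IJ + \fkq^2) = (\fkq \cap IJ) + \fkq^2.
\]
So the problem reduces to analyzing $\fkq \cap IJ$.

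The key observation is that $IJ \subseteq I$, which forces $\fkq \cap IJ \subseteq \fkq \cap I$. By Proposition \ref{3.1} we have $\fkq \cap I = \fkq I$, so $\fkq \cap IJ \subseteq \fkq I$. Substituting back,
\[
\fkq \cap J^2 \subseteq \fkq I + \fkq^2 = \fkq(I+\fkq) = \fkq J,
\]
which combined with the obvious inclusion $\fkq J \subseteq \fkq \cap J^2$ yields the lemma.

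There is essentially no obstacle: the whole proof rides on Proposition \ref{3.1} (which says $\fkq$ behaves well against $I$ because $\fkq$ is a parameter ideal of the Gorenstein ring $R/I$) together with the modular law. No appeal to the GGL hypothesis on $\fka$, $Q$, or $J$ is needed at this step; the lemma holds for any parameter decomposition $Q = (f_1) + \fkq$ with $f_1 \in I$.
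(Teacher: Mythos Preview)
Your proof is correct and follows essentially the same approach as the paper: both expand $J^2$ from $J = I + \fkq$, apply the modular law, and reduce to $\fkq \cap I = \fkq I$ from Proposition \ref{3.1}. The only cosmetic difference is the grouping: the paper writes $J^2 = I^2 + \fkq J$ and intersects with $\fkq$ to get $\fkq J + (\fkq \cap I^2)$, whereas you write $J^2 = IJ + \fkq^2$ and intersect to get $(\fkq \cap IJ) + \fkq^2$; both collapse via $\fkq \cap I = \fkq I$.
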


\begin{proof}
Remember that $J^2 = \fkq J + I^2$, since $J = I + \fkq$. We then have $\fkq \cap J^2 = \fkq J + (\fkq \cap I^2)$, so that $\fkq \cap J^2 = \fkq J$, because $\fkq \cap I^2 \subseteq \fkq \cap I = \fkq I$.
\end{proof}

\begin{proposition}\label{3.4} We set $L = Q:_R\fka$. Then the following assertions hold true.
\begin{enumerate}[{\rm (1)}]
\item $L = J:_R\fka$ and $L^2 \subseteq Q$.
\item $L/J\cong R/\fka$ as an $R$-module.
\item $J^2/QJ \cong R/\fka$ as an $R$-module.
\item $\fka L = \fka Q$.
\item $L^2 = QL$.
\item $J^3=QJ^2$ but $J^2 \ne QJ$.
\end{enumerate}
\end{proposition}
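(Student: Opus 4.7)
The plan is to descend to $\bar{R} = R/\fkq$, which by Theorem~\ref{3.2} is a one-dimensional $\GGL$ ring with respect to $\bar{\fka} = \fka/\fkq$, canonical ideal $\bar{J} = J/\fkq$, and reduction $(\bar{f_1})$. Set $\bar{K} = \bar{J}/\bar{f_1}$ and $\bar{S} = \bar{R}[\bar{K}] \subseteq \mathrm{Q}(\bar{R})$. Fact~\ref{2.2} and Theorem~\ref{2.3} supply the identities $\bar{K}^2 = \bar{K}^3 = \bar{S}$, $\bar{\fka}\bar{S} = \bar{\fka}$, $\bar{\fka} = \bar{R}:_{\bar{R}}\bar{S}$, and $\bar{S}/\bar{K} \cong \bar{R}/\bar{\fka}$. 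A short duality argument applied to $0 \to \bar{\fka} \to \bar{R} \to \bar{R}/\bar{\fka} \to 0$ (using $\mathrm{Ext}^1_{\bar{R}}(\bar{R}/\bar{\fka}, \bar{K}) \cong \bar{R}/\bar{\fka}$, since $\bar{R}/\bar{\fka}$ is Gorenstein and $\bar{K} \cong \mathrm{K}_{\bar{R}}$) then yields the explicit description $\bar{L} := L/\fkq = \bar{f_1}\bar{S}$.

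From $\bar{L} = \bar{f_1}\bar{S}$ the one-dimensional versions of (1)--(6) follow by elementary manipulation: for instance $\bar{L}^2 = \bar{f_1}^2\bar{S} = \bar{Q}\bar{L}$, $\bar{\fka}\bar{L} = \bar{f_1}(\bar{\fka}\bar{S}) = \bar{f_1}\bar{\fka} = \bar{\fka}\bar{Q}$, $\bar{J}^3 = \bar{f_1}^3\bar{S} = \bar{Q}\bar{J}^2$, and $\bar{J}^2/\bar{Q}\bar{J} \cong \bar{S}/\bar{K} \cong \bar{R}/\bar{\fka}$. The lifts of (1) and (2) back to $R$ are immediate because $\fkq \subseteq Q \cap J \cap L$, and (3) follows from these one-dimensional identities together with Lemma~\ref{3.0} to identify $J^2/QJ$ with $\bar{J}^2/\bar{Q}\bar{J}$.

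The crux is lifting (4), (5), and (6), because the one-dimensional equalities only furnish $\fka L \subseteq \fka Q + \fkq$, $L^2 \subseteq QL + \fkq$, and $J^3 \subseteq QJ^2 + \fkq$, with a spurious $\fkq$-term that must be removed. The first step is to observe $L \subseteq \fka$, which follows from $L \cdot J \subseteq L^2 \subseteq Q$ combined with $\fka = Q :_R J$; this yields $\fka L, L^2 \subseteq \fka^2$. Next, using $\fkq :_R f_1 = \fkq$ (a consequence of $f_1, \ldots, f_d$ being a regular sequence), one checks directly that $\fka Q \cap \fkq = \fka\fkq$ and $QL \cap \fkq = \fkq L$, which reduce (4) to $\fka L \cap \fkq \subseteq \fka\fkq$ and (5) to $L^2 \cap \fkq \subseteq \fkq L$. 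The main obstacle is verifying these two intersection identities: the plan is to write $L = J + R\ell$ with $\ell$ chosen as a lift of $\bar{f_1}\bar{s}$ for some $\bar{s} \in \bar{S} \setminus \bar{K}$, and to combine the super-regularity of $f_2, \ldots, f_d$ with respect to the Ulrich module $C$ of the defining exact sequence (established in the proof of $(1) \Rightarrow (2)$ of Theorem~\ref{1.2}) with Lemma~\ref{3.0} to control each summand of $L^2 = J^2 + J\ell + R\ell^2$. Finally, (6) follows from a parallel strengthening of Lemma~\ref{3.0} giving $\fkq \cap J^3 \subseteq \fkq J^2$, whence $J^3 = QJ^2 + (J^3 \cap \fkq) \subseteq QJ^2$; the inequality $J^2 \ne QJ$ is immediate from (3).
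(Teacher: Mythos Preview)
Your treatment of (1)--(3) is correct and matches the paper's: reduce to $\bar R=R/\fkq$, identify $\bar L=\bar f_1\bar S$ (the paper obtains this from $K:\fkc=S$ rather than via $\Ext^1$, but either works), and use Lemma~\ref{3.0} to descend the isomorphism $J^2/QJ\cong \bar J^2/\bar Q\bar J$.

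The difficulty is in (4)--(6), where your proposal diverges from the paper and leaves real gaps. You correctly isolate the obstruction: the one-dimensional identities only give $\fka L\subseteq \fka Q+\fkq$ etc., and you must kill the spurious $\fkq$. But your plan to do so---decompose $L=J+R\ell$ and ``combine the super-regularity of $f_2,\ldots,f_d$ with respect to $C$ with Lemma~\ref{3.0}''---is not carried out, and it is not clear it can be. First, the equality $\fka C=\fkq C$ (hence super-regularity for $C$) was established in the proof of $(1)\Rightarrow(2)$ only for a \emph{specific} choice of $f_2,\ldots,f_d$ (a reduction of $\fka$ modulo $f_1$), whereas Proposition~\ref{3.4} must hold for any $Q$ satisfying Theorem~\ref{3.2}. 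Second, even granting that, it is unclear how super-regularity for $C=I/(f_1)$ controls $\fka L\cap\fkq$ or $L^2\cap\fkq$; the summand $R\ell^2$ in particular has no evident link to $C$. Likewise your ``parallel strengthening of Lemma~\ref{3.0}'' for (6) amounts to $\fkq\cap I^3\subseteq \fkq J^2$, which does not follow from $\fkq\cap I=\fkq I$ without knowing that $R/I^n$ has large depth.

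The paper avoids all of this. For (4) it argues by induction on $d$, using the key trick of passing simultaneously to $R/(f_2)$ and $R/(f_1+f_2)$ (note $Q=(f_1,f_1+f_2,f_3,\ldots,f_d)$): the two inductive inclusions intersect to give $\fka L\subseteq \fka Q$, since $(f_1+f_2)\cap[\fka(f_1+f_2)+(f_2,\ldots,f_d)]=\fka(f_1+f_2)+(f_1+f_2)(f_2,\ldots,f_d)$. For (5) it is then immediate from (4): if $x=\sum f_ix_i\in L^2\subseteq Q$, then for every $\alpha\in\fka$ one has $\alpha x\in\fka L^2=(\fka L)L=(\fka Q)L\subseteq Q^2$, forcing $\alpha x_i\in Q$ and hence $x_i\in L$. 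For (6), the equality $J^3=QJ^2$ is read off from \cite[Proposition~2.6]{GNO2} using $\mu_R(L/J)=1$. Your deduction of $J^2\ne QJ$ from (3) is fine (the paper argues by contradiction via \cite[Theorem~3.7]{GMP}).
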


\begin{proof}
(1), (2), (3)~First, consider the case where $d = 1$. Let us maintain the notation of the proof of Proposition \ref{3.3}. Then $Q :_R \fkc = I :_R\fkc = f_1S$. In fact, we have $\fkc = K:S=R:K$ (see Fact \ref{2.2} and Theorem \ref{2.3}) and hence $f_1 \in \fkc$ . Let $x \in R$. Then $x{\cdot}\fkc \subseteq I$ if and only if $\frac{x}{f_1}{\cdot}\fkc \subseteq K$. The latter condition is equivalent to saying that $\frac{x}{f_1} \in S$, since $K : \fkc = K:(K:S)=S$. Thus $I:_R\fkc = f_1S$. Because $f_1S{\cdot}\fkc = f_1\fkc \subseteq Q=(f_1),$ we get $I:_R\fkc = f_1S \subseteq Q :_R\fkc$. Hence $Q:_R\fkc = I :_R\fkc = f_1S$. Consequently $$(Q:_R\fkc)^2 = f_1(f_1S) \subseteq Q=(f_1),$$ and $[Q:_R\fkc]/I = f_1S/f_1K \cong S/K \cong R/\fkc$ by Theorem \ref{2.3}, which proves Assertions (1) and (2), because $\fka = \fkc$.
Assertion (3) is now clear, since $$I^2/f_1I \cong K^2/K \cong R/\fkc$$ by Theorem \ref{2.3}. Now consider the case where $d \ge 2$. To show Assertions (1) and (2), passing to the ring $R/\fkq$, we can safely assume that $d=1$, and  we have already done with the case. Consider Assertion (3). We set $\overline{R} = R/\fkq$ and denote by $\overline{*}$ the reduction $\mod~\fkq$. Let $\varphi : J^2/QJ \to \overline{J}^2/f_1\overline{J}$ be the canonical epimorphism. We then have $\Ker \varphi = [J^2 \cap (\fkq + f_1J)]/QJ$. Hence, because $J^2 \cap \fkq = \fkq J$ by Lemma \ref{3.0}, we have
$$J^2 \cap (\fkq + f_1J)= f_1J + (J^2 \cap \fkq)=f_1J + \fkq J =QJ,$$
whence the required isomorphism $J^2/QJ \cong R/\fka$ follows.

(4)~Suppose $d=1$. Then $\fkc L = \fkc{\cdot}f_1S =f_1{\cdot}\fkc$, whence the assertion follows. Suppose that $d \ge 2$ and that Assertion (4) holds true for $d-1$. Note that $Q = (f_1, f_1 + f_2)+(f_3, \ldots, f_d)$. Then, because $R/(f_1+f_2)$ and $R/(f_2)$ are $\GGL$ rings with respect to $\fka/(f_1+f_2)$ and $\fka/(f_2)$ respectively, thanks to the hypothesis of induction on $d$, we get
$$\fka L \subseteq [\fka Q + (f_1+f_2)] \cap[\fka Q + (f_2)]=\fka Q +\left\{(f_1+f_2) \cap [\fka Q + (f_2)]\right\}.$$
Since $\fka Q + (f_2) \subseteq  \fka {\cdot} (f_1+f_2) + (f_2, f_3, \ldots, f_d)$, we furthermore have that
$$\fka L \subseteq \fka Q + \left\{(f_1+f_2) \cap [\fka {\cdot} (f_1+f_2) + (f_2, f_3, \ldots,f_d)] \right\}= \fka Q + (f_1+f_2){\cdot}(f_2, f_3, \ldots, f_d) = \fka Q.$$
Hence $\fka L = \fka Q$.

(5)~Let $x \in L^2$. Then, $x \in Q$, since $L^2 \subseteq Q$ by Assertion (1). We write $x = \sum_{i=1}^df_ix_i$ with $x_i \in R$. Let $\alpha \in \fka$. Then, because $$\alpha x = \sum_{i=1}^df_i (\alpha x_i) \in \fka L^2 \subseteq Q^2$$ by Assertion (4), we get $\alpha x_i \in Q$ for all $1 \le i \le d$, whence $x_i \in Q :_R\fka =L$. Thus $L^2 = QL$.

(6)~The equality $J^3 = QJ^2$ is a direct consequence of \cite[Proposition 2.6]{GNO2}, since $\mu_R(L/J) = 1$ by Assertion (2). Suppose that $J^2 = QJ$ and let $\overline{*}$ denote the reduction $\mod~\fkq$. Then since $\overline{J} \cong \rmK_{\overline{R}}$ and $\overline{J}^2 = \overline{Q}{\cdot}\overline{J}$,  by \cite[Theorem 3.7]{GMP} $\overline{R}$ is a Gorenstein ring, which is impossible. Hence $J^2 \ne QJ$.
\end{proof}

\begin{proposition}\label{3.5}
The sequence $f_2, f_3, \ldots, f_d$ is a super-regular sequence of $R$ with respect to $J$. Hence $\depth \gr_J(R) = d-1$.
\end{proposition}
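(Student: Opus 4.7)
The plan is to apply the classical Valabrega--Valla criterion: since $f_2, f_3, \ldots, f_d$ is part of a system of parameters in a Cohen--Macaulay ring and therefore an $R$-regular sequence lying in $J$, the initial forms in $\gr_J(R)$ form a regular sequence (which is precisely the assertion that $f_2,\ldots,f_d$ is super-regular with respect to $J$) if and only if
$$\fkq \cap J^{n+1} = \fkq J^n \quad \text{for every } n \ge 0.$$
Once this identity is established, the $d-1$ initial forms $f_2^*, \ldots, f_d^* \in \gr_J(R)$ form a regular sequence, and consequently $\depth \gr_J(R) \ge d-1$.

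The cases $n = 0$ (trivial, since $\fkq \subseteq J$) and $n = 1$ (which is exactly Lemma \ref{3.0}) will serve as the base. For $n \ge 2$, I would argue by induction on $n$. Proposition \ref{3.4}(6) gives $J^{n+1} = QJ^n = f_1 J^n + \fkq J^n$. Take $x \in \fkq \cap J^{n+1}$ and write $x = f_1 y + z$ with $y \in J^n$ and $z \in \fkq J^n$. Then $f_1 y = x - z \in \fkq$, and since $f_1, f_2, \ldots, f_d$ is an $R$-regular sequence we deduce $y \in \fkq$. By the inductive hypothesis $y \in \fkq \cap J^n = \fkq J^{n-1}$, so $f_1 y \in f_1 \fkq J^{n-1} \subseteq \fkq J^n$, and therefore $x = f_1y + z \in \fkq J^n$, as required.

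For the matching upper bound $\depth \gr_J(R) \le d-1$, I would argue by contradiction. If $\gr_J(R)$ were Cohen--Macaulay, then applying Valabrega--Valla to the minimal reduction $Q$, whose generators $f_1,\ldots,f_d$ form an $R$-regular sequence contained in $J$, would force $Q \cap J^{n+1} = QJ^n$ for every $n \ge 0$. Specializing to $n = 1$ and combining with $J^2 \subseteq Q$ from Proposition \ref{3.3} yields $J^2 = Q \cap J^2 = QJ$, contradicting Proposition \ref{3.4}(6). Hence $\depth \gr_J(R) = d-1$.

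The only real obstacle is the inductive step for $n \ge 2$, but as explained above the cancellation $J^{n+1} = QJ^n$ (a consequence of $J^3 = QJ^2$) together with the regularity of $f_1, \ldots, f_d$ reduces it essentially to tracking a single application of the regular-sequence property; everything else is bookkeeping with Lemma \ref{3.0} and the earlier results on $J$ and $Q$.
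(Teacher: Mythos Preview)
Your proposal is correct and follows essentially the same route as the paper's proof: both establish $\fkq \cap J^{n+1} = \fkq J^n$ by induction on $n$, using Lemma~\ref{3.0} for the base case, the equality $J^{n+1} = QJ^n$ from Proposition~\ref{3.4}(6) for $n \ge 2$, and the regularity of $f_1$ modulo $\fkq$ to push the argument through; and both derive the upper bound on depth by observing that $\depth \gr_J(R) = d$ would make $f_1,\ldots,f_d$ super-regular, forcing $J^2 = Q \cap J^2 = QJ$ in contradiction with Proposition~\ref{3.4}(6). The only cosmetic differences are that you phrase the inductive step element-wise while the paper uses the modular law at the ideal level, and you invoke Valabrega--Valla explicitly whereas the paper speaks directly of super-regularity.
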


\begin{proof}
To see the first assertion, it suffices to show that $\fkq \cap J^{n+1} = \fkq J^n$ for all $n \ge 1$. By Lemma \ref{3.0} we may assume that $n \ge 2$ and that  our assertion holds true for $n-1$. Then, since $J^{n+1}= QJ^n=\fkq J^n + f_1J^n$ by Proposition \ref{3.4} (5), we have
$$\fkq \cap J^{n+1} = \fkq J^n + (\fkq \cap f_1J^n).$$ Consequently, because $\fkq \cap f_1J^n=f_1{\cdot}(\fkq \cap J^n)$ (remember that $f_1, f_2, \ldots, f_d$ is an $R$-regular sequence), by the hypothesis of induction on $n$ we have $\fkq \cap f_1J^n \subseteq \fkq J^n$. Hence $\fkq \cap J^{n+1}= \fkq  J^n$. Consequently, $\depth \gr_J(R) \ge d-1$. Suppose that $\depth \gr_J(R) = d$. Then, $f_1, f_2, \ldots, f_d$ is a super-regular sequence of $R$ with respect to $J$, so that $Q \cap J^2= QJ$. Therefore, $J^2 = QJ$, because $J^2 \subseteq Q$ by Proposition \ref{3.4} (1), which contradicts Proposition \ref{3.4} (6). Hence $\depth \gr_J(R) = d-1$.
\end{proof}

Let $\mathcal{T}= \mathcal{R}(Q)$ and $\mathcal{R}=\mathcal{R}(J)$ be the Rees algebras of $Q$ and $J$ respectively. We now consider the Sally module $\mathcal{S}_Q(J) = J\mathcal{R}/J\mathcal{T}$ of $J$ with respect to $Q$ (see \cite{V}).

\begin{thm}\label{3.6}
$\mathcal{S}_Q(J) \cong (\mathcal{T}/\fka \mathcal{T})(-1)$ as a graded $\mathcal{T}$-module.
\end{thm}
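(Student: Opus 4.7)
My plan is to construct an explicit surjection $\psi\colon (\mathcal{T}/\fka\mathcal{T})(-1) \twoheadrightarrow \mathcal{S}_Q(J)$ and then verify its injectivity via a length comparison, proceeding by induction on $d$. By Proposition \ref{3.4}(3), $J^2/JQ$ is cyclic with annihilator $\fka$; pick $\xi \in J^2$ whose class generates it. I define $\psi$ as the graded $\mathcal{T}$-module homomorphism sending the class of $1$ (in shifted degree one) to $\xi + JQ$; concretely, in degree $n \geq 1$ the map $\psi_n\colon Q^{n-1}/\fka Q^{n-1} \to J^{n+1}/JQ^n$ sends $q + \fka Q^{n-1}$ to $q\xi + JQ^n$. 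Well-definedness reduces to $\fka\xi \subseteq JQ$, which follows from $\fka J = \fka Q$ via $\fka J^2 = \fka Q J \subseteq JQ$. By induction on $n$, $\fka J^{n+1} = \fka Q J^n \subseteq JQ^n$, so $\fka$ annihilates $\mathcal{S}_Q(J)$; it obviously annihilates $(\mathcal{T}/\fka\mathcal{T})(-1)$.

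Both modules thus become graded modules over $\mathcal{T}/\fka\mathcal{T}$, and this quotient is isomorphic to the polynomial ring $(R/\fka)[T_1, \ldots, T_d]$: the regular sequence $f_1, \ldots, f_d$ presents $\mathcal{T} \cong R[T_1, \ldots, T_d]/(T_i f_j - T_j f_i)$, and these Koszul-type relations vanish modulo $Q \subseteq \fka$. Consequently, $M := (\mathcal{T}/\fka\mathcal{T})(-1)$ is the rank-one free graded module shifted by one, so $\ell_R([M]_n) = \binom{n+d-2}{d-1}\ell_R(R/\fka)$ for $n \geq 1$. For surjectivity of $\psi$, Proposition \ref{3.4}(6) yields $J^3 = QJ^2$, which iterates to $J^{n+1} = Q^{n-1}J^2$ for all $n \geq 1$; combined with $J^2 = JQ + R\xi$, we get $J^{n+1} = JQ^n + Q^{n-1}\xi$, so each $\psi_n$ is surjective. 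It therefore suffices to establish the matching length $\ell_R([\mathcal{S}_Q(J)]_n) = \binom{n+d-2}{d-1}\ell_R(R/\fka)$.

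In the injectivity base case $d = 1$: here $J = I$ and $Q = (f_1)$; working with the fractional canonical ideal $K = I/f_1 \subseteq \overline{R}$ and $S = R[K]$, we compute $I^{n+1}/f_1^n I \cong K^{n+1}/K$. By Theorem \ref{2.3}(i), $K^{n+1} = K^2 = S$ for $n \geq 1$; by Theorem \ref{2.3}(iii), $S/K \cong R/\fka$. Hence $\ell_R([\mathcal{S}_Q(J)]_n) = \ell_R(R/\fka) = \binom{n-1}{0}\ell_R(R/\fka)$, and the surjection $\psi_n$ between $R$-modules of equal finite length is bijective.

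For the inductive step ($d \geq 2$) I would pass to $\overline{R} = R/(f_d)$ using the super-regular element $f_d$ from Proposition \ref{3.5}. Verifying that $(\overline{R}, \overline{Q}, \overline{J}, \overline{\fka})$ satisfies Theorem \ref{3.2}(1) is routine: $\overline{I} \cong \rmK_{\overline{R}}$ follows from $(f_d) \cap I = f_d I$ (regularity of $f_d$ on $R/I$), the colon descends since $f_d \in Q$, and $\rme_J^1$ is preserved by the superficial reduction. The main obstacle is that $f_d \in \fka$ already annihilates both $M$ and $\mathcal{S}_Q(J)$, so naive reduction modulo $f_d$ does not directly relate $\mathcal{S}_Q(J)$ to $\mathcal{S}_{\overline{Q}}(\overline{J})$. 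I would resolve this by carefully tracking the length of $J^{n+1}/JQ^n$: iteratively descending through the super-regular sequence $f_2, \ldots, f_d$ to the one-dimensional quotient $R/\fkq$ and combining with the inductive hypothesis applied to $\overline{R}$ yields the equality $\ell_R([\mathcal{S}_Q(J)]_n) = \binom{n+d-2}{d-1}\ell_R(R/\fka)$, which together with surjectivity forces $\psi_n$ to be an isomorphism in every degree.
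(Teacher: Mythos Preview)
Your construction of the surjection $\psi$ coincides with the paper's: both use Proposition \ref{3.4}(3) to see that $[\mathcal{S}_Q(J)]_1 = J^2/QJ \cong R/\fka$ and that $\mathcal{S}_Q(J)$ is generated in degree one over $\mathcal{T}$, yielding a graded epimorphism from $(\mathcal{T}/\fka\mathcal{T})(-1)$. Your explicit verification that $J^{n+1}=JQ^n+Q^{n-1}\xi$ is a nice unpacking of what the paper cites from \cite[Lemma 2.1]{GNO}. Your base case $d=1$ is also correct.

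The genuine gap is the inductive step. You correctly identify the obstacle: since $f_d\in\fka$ already kills both $(\mathcal{T}/\fka\mathcal{T})(-1)$ and $\mathcal{S}_Q(J)$, reducing modulo $f_d$ gives no new information about these modules, and there is no obvious comparison map between $\mathcal{S}_Q(J)$ and $\mathcal{S}_{\overline Q}(\overline J)$. Your proposed resolution (``carefully tracking the length \ldots\ and combining with the inductive hypothesis'') is not a proof; concretely, you would need to show that $(JQ^n:_R\xi)\cap Q^{n-1}\subseteq \fka Q^{n-1}$, and nothing you have written addresses this. The inductive hypothesis gives $\ell_R\bigl([\mathcal{S}_{\overline Q}(\overline J)]_n\bigr)=\binom{n+d-3}{d-2}\ell_R(R/\fka)$, but you provide no mechanism linking this number to $\ell_R\bigl([\mathcal{S}_Q(J)]_n\bigr)$.

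The paper avoids induction altogether. It argues by contradiction: if $X=\Ker\psi\ne(0)$, pick $\fkp\in\Ass_{\mathcal T}X\subseteq\Ass_{\mathcal T}(\mathcal T/\fka\mathcal T)=\{\fkm\mathcal T\}$ (the last equality because $\mathcal T/\fka\mathcal T$ is a polynomial ring over the Artinian local ring $R/\fka$). Localizing at $\fkp=\fkm\mathcal T$, one has $\ell_{\mathcal T_\fkp}\bigl((\mathcal T/\fka\mathcal T)_\fkp\bigr)=\ell_R(R/\fka)$, while \cite[Proposition 2.2]{GNO} gives $\ell_{\mathcal T_\fkp}(\mathcal S_\fkp)=\rme_J^1(R)-\ell_R(J/Q)$. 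Now invoke the two numerical inputs already secured in the paper: $\rme_J^1(R)=\ell_R(R/\fka){\cdot}\rmr(R)$ from Theorem \ref{3.2}, and $\ell_R(J/Q)=\ell_R(R/\fka){\cdot}(\rmr(R)-1)$ because $J/Q$ is $R/\fka$--free of rank $\rmr(R)-1$. These force $\ell_{\mathcal T_\fkp}(\mathcal S_\fkp)=\ell_R(R/\fka)$, hence $X_\fkp=(0)$, a contradiction. This one-shot localization argument is both shorter and complete; I recommend replacing your inductive step with it.
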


\begin{proof}
We set $\mathcal{S}= \mathcal{S}_Q(J)$ and denote, for each $n \in \Bbb Z$, by $[\mathcal{S}]_n$ the homogeneous component of $\mathcal{S}$ of degree $n$. Then $[\mathcal{S}]_1 = J^2/QJ$ and $\mathcal{S}=\mathcal{T}{\cdot}[\mathcal{S}]_1$ (\cite[Lemma 2.1]{GNO}). Hence by Proposition \ref{3.4} (3), we get an epimorpism $\varphi :(\mathcal{T}/\fka \mathcal{T})(-1) \to \mathcal{S}$ of graded $\mathcal{T}$-modules. Let $X = \Ker \varphi$ and assume that $X\ne (0)$. We choose an element $\fkp \in \Ass_{\mathcal{T}}X$. Then, since $\fkp \in \Ass_{\mathcal{T}}\mathcal{T}/\fka \mathcal{T}$,  and $\mathcal{T}/\fka \mathcal{T}=(R/\fka)[X_1, X_2, \ldots, X_d]$ is the polynomial ring over $R/\fka$ (remember that $\mathcal{T}$ is isomorphic to the symmetric algebra of $Q$ over $R$), we have $\fkp = \m \mathcal{T}$. Then  $\ell_{\mathcal{T}_\fkp}((\mathcal{T}/\fka \mathcal{T})_\fkp) = \ell_R(R/\fka)$, while by \cite[Proposition 2.2]{GNO} we have $$\ell_{\mathcal{T}_\fkp}(\mathcal{S}_\fkp) = \rme_J^1(R) - \ell_R(J/Q).$$ Therefore, $\ell_{\mathcal{T}_\fkp}(\mathcal{S}_\fkp)= \ell_R(R/\fka)$, because $\rme_J^1(R) = \ell_R(R/\fka){\cdot}\rmr (R)$ by Theorem \ref{3.2} and $\ell_R(J/Q) = \ell_R(R/\fka){\cdot}(\rmr(R)-1)$ by Theorem \ref{2.3}. Thus $\ell_{\mathcal{T}_\fkp}((\mathcal{T}/\fka \mathcal{T})_\fkp) =\ell_{\mathcal{T}_\fkp}(\mathcal{S}_\fkp)$, which forces $X_\fkp= (0)$. This is absurd. Thus $(\mathcal{T}/\fka \mathcal{T})(-1) \cong \mathcal{S}$  as a graded $\mathcal{T}$-module.
\end{proof}

Because $$\ell_R(R/J^{n+1}) = \rme_J^0(R){\cdot}\binom{n+d}{d} -\left[\rme_J^0(R) - \ell_R(R/J)\right] {\cdot}\binom{n+d-1}{d-1} - \ell_R([\mathcal{S}]_n)$$ for all $n \in \Bbb Z$ (see \cite[Proposition 2.2]{GNO}), by Theorem \ref{3.6} we readily get the following.

\begin{cor}
The Hilbert function of $R$ with respect to $J$ is given by
{\footnotesize $$\ell_R(R/J^{n+1}) =
\rme_J^0(R){\cdot}\binom{n+d}{d} -\left[\rme_J^0(R) - \ell_R(R/J) + \ell_R(R/\fka)\right]{\cdot}\binom{n+d-1}{d-1} + \ell_R(R/\fka){\cdot}\binom{n+d-2}{d-2}
$$}
\noindent
for $n \ge 1$. Hence, $\rme_J^2(R) = \ell_R(R/\fka)$ if $d \ge 2$, and $\rme_J^i(R) = 0$ for all $3 \le i \le d$ if $d \ge 3$.
\end{cor}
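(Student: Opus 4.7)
The plan is to derive the Hilbert function identity directly from the two facts already in hand: the length formula
\[
\ell_R(R/J^{n+1}) = \rme_J^0(R)\binom{n+d}{d} - \bigl[\rme_J^0(R) - \ell_R(R/J)\bigr]\binom{n+d-1}{d-1} - \ell_R([\mathcal{S}]_n)
\]
stated just above the corollary (from \cite[Proposition 2.2]{GNO}), together with the structure theorem for the Sally module $\mathcal{S} = \mathcal{S}_Q(J)$ obtained in Theorem \ref{3.6}. The work to be done is purely a combinatorial rewriting.

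First I would compute $\ell_R([\mathcal{S}]_n)$. By Theorem \ref{3.6} there is a graded isomorphism $\mathcal{S} \cong (\mathcal{T}/\fka\mathcal{T})(-1)$, so $[\mathcal{S}]_n \cong [\mathcal{T}/\fka\mathcal{T}]_{n-1}$. Since $\mathcal{T} = \mathcal{R}(Q)$ is the Rees algebra of the parameter ideal $Q = (f_1,\ldots,f_d)$ and $R$ is Cohen--Macaulay, $\mathcal{T}$ is isomorphic to the symmetric algebra $R[X_1,\ldots,X_d]$, whence
\[
\mathcal{T}/\fka\mathcal{T} \cong (R/\fka)[X_1,\ldots,X_d].
\]
Therefore $[\mathcal{T}/\fka\mathcal{T}]_{n-1}$ is a free $R/\fka$-module of rank equal to the number of monomials of degree $n-1$ in $d$ variables, namely $\binom{n+d-2}{d-1}$. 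Consequently $\ell_R([\mathcal{S}]_n) = \ell_R(R/\fka)\binom{n+d-2}{d-1}$ for every $n \ge 1$.

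Next I would substitute this into the length formula and reorganize using Pascal's identity. Precisely, by $\binom{n+d-1}{d-1} = \binom{n+d-2}{d-1} + \binom{n+d-2}{d-2}$ one has
\[
-\ell_R(R/\fka)\binom{n+d-2}{d-1} = -\ell_R(R/\fka)\binom{n+d-1}{d-1} + \ell_R(R/\fka)\binom{n+d-2}{d-2},
\]
and feeding this back yields exactly the displayed Hilbert function. Finally I would read off the Hilbert coefficients by comparing with the standard expansion $\ell_R(R/J^{n+1}) = \sum_{i=0}^{d}(-1)^i\rme_J^i(R)\binom{n+d-i}{d-i}$: the $\binom{n+d-2}{d-2}$ coefficient is $\rme_J^2(R) = \ell_R(R/\fka)$ when $d\ge 2$, while no term $\binom{n+d-i}{d-i}$ with $i\ge 3$ appears, so $\rme_J^i(R)=0$ for $3 \le i \le d$ when $d\ge 3$.

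There is no serious obstacle here: both ingredients are already established, and the argument reduces to the monomial count in $(R/\fka)[X_1,\ldots,X_d]$ and one application of Pascal's rule. The only point that deserves a line of comment is the identification $\mathcal{T} \cong \mathrm{Sym}_R(Q)$, which relies on $Q$ being generated by a regular sequence in the Cohen--Macaulay ring $R$ and which licenses the description of $\mathcal{T}/\fka\mathcal{T}$ as a polynomial ring over $R/\fka$.
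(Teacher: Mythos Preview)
Your proposal is correct and follows precisely the route the paper intends: the paper itself invokes \cite[Proposition 2.2]{GNO} together with Theorem \ref{3.6} and simply says the corollary follows ``readily,'' while you have filled in the routine computation of $\ell_R([\mathcal{S}]_n)$ from the polynomial ring description $\mathcal{T}/\fka\mathcal{T}\cong (R/\fka)[X_1,\ldots,X_d]$ (already noted in the proof of Theorem \ref{3.6}) and the Pascal identity.
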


\section{Example}

Let $S=k[[X,Y,Z,V]]$ be the formal power series ring over an infinite field $k$ and let $\fkb = {\Bbb I}_2(\begin{smallmatrix}
X^2&Y+V&Z\\
Y&Z&X^3
\end{smallmatrix})$ denote the ideal of $S$ generated by $2 \times 2$ minors of the matrix $(\begin{smallmatrix}
X^2&Y+V&Z\\
Y&Z&X^3
\end{smallmatrix})$. We set $R = S/\fkb$. We denote by  $x,y,z,v$  the images of $X,Y, Z,V$ in $R$, respectively. Then we have the following.

\begin{ex} The following assertions hold true.
\begin{enumerate}[{\rm (1)}]
\item $R$ is a two-dimensional $\GGL$ ring with respect to $\fka = (x^2,y,z,v)$.\item $\rmr(R) = 2$ and $I=(x^2,y)$ is a canonical ideal of $R$.
\item Set $Q=(x^2,v)$ and $J =I+Q$. Then $Q$ is a parameter ideal of $R$ with $x^2 \in I$.
\item We have $\fka = Q:_RJ$, $\fka J = \fka Q$, and $\rme_J^1(R) = \ell_R(R/\fka){\cdot}\rmr(R)=4$.
\end{enumerate}

\begin{proof}
Since $$R/(v) \cong k[[X,Y,Z]]/{\Bbb I}_2(\begin{smallmatrix}
X^2&Y&Z\\
Y&Z&X^3
\end{smallmatrix}) \cong k[[t^3,t^7,t^8]]$$ where $t$ denotes an indeterminate over $k$, we have $\dim R/(v) =1$. Hence $\operatorname{ht}_S\fkb \ge2$, so that $R$ is a Cohen-Macaulay ring with $\dim R = 2$.  Because $R/vR = k[[t^3,t^7,t^8]]$ is a $\GGL$ ring with respect to $(t^6,t^7,t^8)$ and $v$ is a non-zerodivisor of $R$, by Theorem \ref{2.4} (2) $R$ is a $\GGL$ ring with respect to $\fka$. To see that $I \cong \rmK_R$, note that $(t^6,t^7)$ is a canonical ideal of $k[[t^3,t^7,t^8]]$. Since $R/I = S/(X^2, Y, Z^2)$, the element $v$ acts on $R/I$ as a non-zerodivisor, so that $(v) \cap I = vI$. Hence $J/(v) =[I+(v)]/(v) \cong I/vI$. Because the ideal $J/(v)$ corresponds to $(t^6,t^7)$ under the identification $$R/(v)= k[[X,Y,Z]]/{\Bbb I}_2(\begin{smallmatrix}
X^2&Y&Z\\
Y&Z&X^3
\end{smallmatrix}) = k[[t^3,t^7,t^8]],$$ we see $\rmr_R(I/vI)=1$, where $\rmr_R(*)$ stands for the Cohen-Macaulay type. Therefore $\rmr_R(I)=1$, whence $I \cong \rmK_R$ because $(0):_RI = (0)$. Since $$R/Q \cong k[X,Y,Z,V]/(X^2,Y^2,Z^2,YZ,V),$$ we get $Q:_RJ = Q:_Ry = (x^2,y,z,v) = \fka \supseteq J$. It is direct to check that $\fka J = \fka Q$. The equality  $\rme_J^1(R) = \ell_R(R/\fka){\cdot}\rmr(R)=4$ follows from the fact that $\ell_R(R/(x^2,y,z,v))=\rmr(R)=2$.
\end{proof}

\end{ex}



\addcontentsline{toc}{section}{references}

\end{document}